\titleformat*{\section}{\fontsize{14}{20}\selectfont}
\titleformat*{\subsection}{\fontsize{13}{17}\selectfont}
\theoremstyle{definition}
\newcommand{\R}{\mathbb{R}}
\newcommand{\Z}{\mathbb{Z}}
\newcommand{\N}{\mathbb{N}}
\newcommand{\Er}{\mathbb{E}}
\newcommand{\Ha}{\mathbb{H} \mathbb{A}}
\newtheorem{thm}{Theorem}[section]
\newtheorem{defi}[thm]{Definition}
\newtheorem{lem}[thm]{Lemma}
\newtheorem{cor}[thm]{Corollary}
\newtheorem{prp}[thm]{Proposition}
\newtheorem{exm}[thm]{Example}
\newtheorem{rem}[thm]{Remark}
\newtheorem{qst}[thm]{Question}
\DeclareMathOperator{\inv}{inv}
\DeclareMathOperator{\diam}{diam}
\DeclareMathOperator{\img}{Im}
\title{On The Whisker Topology}
\author{John K. Aceti}
\date{\today}
\begin{document}
\maketitle

\begin{abstract}
The purpose of this paper is to explore properties of the whisker topology, which is a topology endowed on the fundamental group and whose utility is to detect locally complicated phenomena in pathological topological spaces. We show that the whisker topology preserves products, resolve an open question regarding the existence of a space which makes $\pi_1^{wh}(X,x_0)$ non-discrete, non-abelian, and Hausdorff, and show the whisker topology is not separable on the earring group $\pi_1(\Er^1,x_0)$.
\end{abstract}

\begin{centering}
  
\section{\centering Introduction}

\end{centering}

The notion of topologizing the fundamental group first appears in a 1935 paper \cite{hurewicz} by Hurewicz. Later  J. Dugundji \cite{dugundji} expanded on this notion which utilized open covers of a space $X$ to topologize a certain group related to the fundamental group $\pi_1(X,x_0)$. The utility of these results predate the field of ``Shape Theory'' \cite{segal} which extends homotopy theory to spaces which fail to have the type of a $CW$-complex or which fail to have locally ``nice'' topological properties. Research on topological fundamental groups remained dormant for several decades until it regained popularity in a paper by D. Biss \cite{biss}; however, since its publication Biss's paper was retracted for many erroneous statements and incorrect proofs. Despite this, the ``big ideas'' of the paper have inspired many mathematicians to explore the utility of topologized fundamental groups (as well as topologized higher homotopy groups) in the spirit of Biss's ideas. The topology on the fundamental group which Biss considered is known as the ``quotient topology'' (denoted $\pi_1^{qtop}(X,x_0)$) and is often seen to be the most ``natural'' topology to endow on the fundamental group. Other topologies have also been considered various papers such as \cite{brazas},\cite{abdul}.

The whisker topology is a topology on the set $\widetilde{X}$ of path-homotopy classes of paths in a space $X$ starting at a fixed base-point $x_0 \in X$. The end-point projection $p:\widetilde{X} \to X, p([\zeta]) = \zeta(1)$ is continuous and when $X$ meets the usual criteria of covering space theory, $p$ is a universal covering map over $X$ \cite{spanier}. However, outside of the limitations of classical covering space theory, $\widetilde{X}$ often serves as a generalized universal covering space \cite{FZ07} and $p$ retains all of the lifitng properties of standard covering maps. The subspace $p^{-1}(x_0) = \pi_1(X,x_0)$ naturally endows the fundamental group with a functorial topology. The resulting topologized group $\pi_1^{wh}(X,x_0)$ is a left-topological group and has been studied in \cite{pakdam}, \cite{babaee}, \cite{rashid}. The spaces $\pi_1^{wh}(X,x_0)$ and $\widetilde{X}^{wh}$ play an increasingly important role in applications of (generalized) covering space theory, including the higher homotopy theory of Peano continua \cite{engl}.

In this paper, we cover some known properties of the whisker topology and explore some new ones. In  Section 3 we explore these properties and  answer an open question by J. Brazas regarding the existence of a space which makes $\pi_1^{wh}(X,x_0)$ a non-discrete, non-abelian, Hausdorff topological group. In Section 4 we give an analysis of the the connectivity properties of $\pi_1^{wh}(X,x_0)$ and consider what separation axioms hold when the fundamental group is endowed with the whisker topology. Finally, in Section 5 we show that $\pi_1^{wh}(X,x_0)$ can be given a suitable pseudo-metric, a condition for general metrizability, and that $\pi_1^{wh}(\Er_1,x_0)$ fails to be separable.
    
\section{\centering Preliminaries and Notation}

Every topological space $X$ in this paper will be considered path connected, locally path connected and based with some base-point $x_0$. The unit interval $[0,1] \subset \R$ will be denoted by $I$ and will be equipped with its usual inherited subspace topology. Every map $f:X \to Y$ where $X,Y$ are topological spaces is assumed to be continuous. The homomorphism induced by $\pi_1$ by based map $f:(X,x_0) \to (Y,y_0)$ will be denoted by $f_{\#}:\pi_1(X,x_0) \to (Y,y_0)$. A \emph{path} $\zeta:[a,b] \to X$ is a continuous function, in particular if $\zeta(a) = \zeta(b) = x_0$ for $x_0 \in X$ then $\zeta$ is said to be a \emph{loop}. If $[c,d] \subseteq [a,b]$ and $\zeta:[a,b] \to X$ is a path we call the restriction $\zeta|_{[c,d]}$ a \emph{subpath} of $\zeta$. We let $\widetilde{X}$ denote the set of path homotopy classes of paths $\zeta:I \to X$ with $\zeta(0) = x_0$ and $p:\widetilde{X} \to X, p([\zeta]) = \zeta(1)$ the endpoint projection function. Note that $p^{-1}(x_0) = \pi_1(X,x_0)$ is the fundamental group of $X$ at $x_0$. Given a path, we can consider the reverse path $\zeta^{-}:I \to X$ defined $\zeta^{-}(t) = \zeta^{-}(1-t)$. If $\zeta:[a,b] \to X$ and $\eta:[c,d] \to X$ are paths such that $\zeta = \eta \circ h$ for some increasing homeomorphisms $h:[a,b] \to [c,d]$, then we write $\zeta \equiv \eta$ and note that $\equiv$ is an equivalence relation of the set of paths that is finer than path homotopy. We may consider a sequence of paths $\zeta_1,\zeta_2,\zeta_3,\dots, \zeta_n$ such that $\zeta_k(1) = \zeta_{k+1}(0)$for every $k \in \N$. We then define $\prod_{k=1}^{n} \zeta_k = \zeta_1 \cdot \zeta_2 \cdot \zeta_3 \cdots \zeta_n$ which is a path defined as $\zeta_k$ on $[\frac{k-1}{k},\frac{k}{k+1}]$. A sequence $\zeta_1, \zeta_2, \zeta_3, \dots$ is said to be a \emph{null sequence} if $\zeta_k(1) = \zeta_{k+1}(0)$ for every $k \in \N$ and such that every neighborhood of $x$ contains $\zeta_k(I)$ for all but finitely many $k$. Then the \emph{infinite} concatenation of such a \emph{null sequence} is the path $\prod_{k=1}^{\infty} \zeta_k$ which is defined as $\zeta_k$ on $[\frac{k-1}{k},\frac{k}{k+1}]$ and $(\prod_{k=1}^{\infty} \zeta_k)(1) = x.$\\

  In this paper, we will often make use of the notion of a space being \emph{homotopically Hausdorff} \cite{grcon}. We provide this definition here for the reader's convenience.\\

  \begin{defi}
    A space $X$ is \emph{homotopically Hausdorff} at $x \in X$ if for every $[\zeta] \not  \in \pi_1(X,x)\backslash\{1\}$, there exists an open neighborhood $U$ of $x$ so that if $i:U \to X$ is the inclusion map, then $[\zeta] \notin i_{\#}(\pi_1(U,x))$. We say that $X$ is homotopically Hausdorff if it's \emph{homotopically Hausdorff} at all of it's points.
    
  \end{defi}

\section{ \centering Basic Properties and Examples of $\pi_1^{wh}(X,x_0)$}

To endow the fundamental group $\pi_1(X,x_0)$ with the whisker topology; we will describe a basis for a topology on $\widetilde{X}$.\\

\begin{defi}
  The \emph{Whisker Topology} on $\widetilde{X}$ is the topology generated by the sets

  $$N([\zeta],U) = \{[\zeta \cdot \eta] \in \widetilde{X} \mid \img(\eta) \subseteq U \}.$$

  where $U$ is an open neighborhood of $\zeta(1)$ for the path $\zeta:(I,0) \to (X,x_0)$. The resulting space is denoted $\widetilde{X}^{wh}$.
\end{defi}

\begin{defi}
Let $[\eta] \in \pi_1(X,x_0)$ and $U$ be an open neighborhood of $x_0$. Then the whisker topology is generated by the basic open sets of the form:

$$B([\zeta],U) = N([\zeta],U) \cap \pi_1(X,x_0).$$

\end{defi}

It's well know that the whisker topology is indeed a well-defined topology on $\widetilde{X}$ (see \cite{spanier} for a comparison to other topologies). Note that, $\pi_1^{wh}(X,x_0)$ has the subspace topology inherited from $\widetilde{X}^{wh}$.

\begin{figure}[h!]
  \centering
  \includegraphics[scale=0.5]{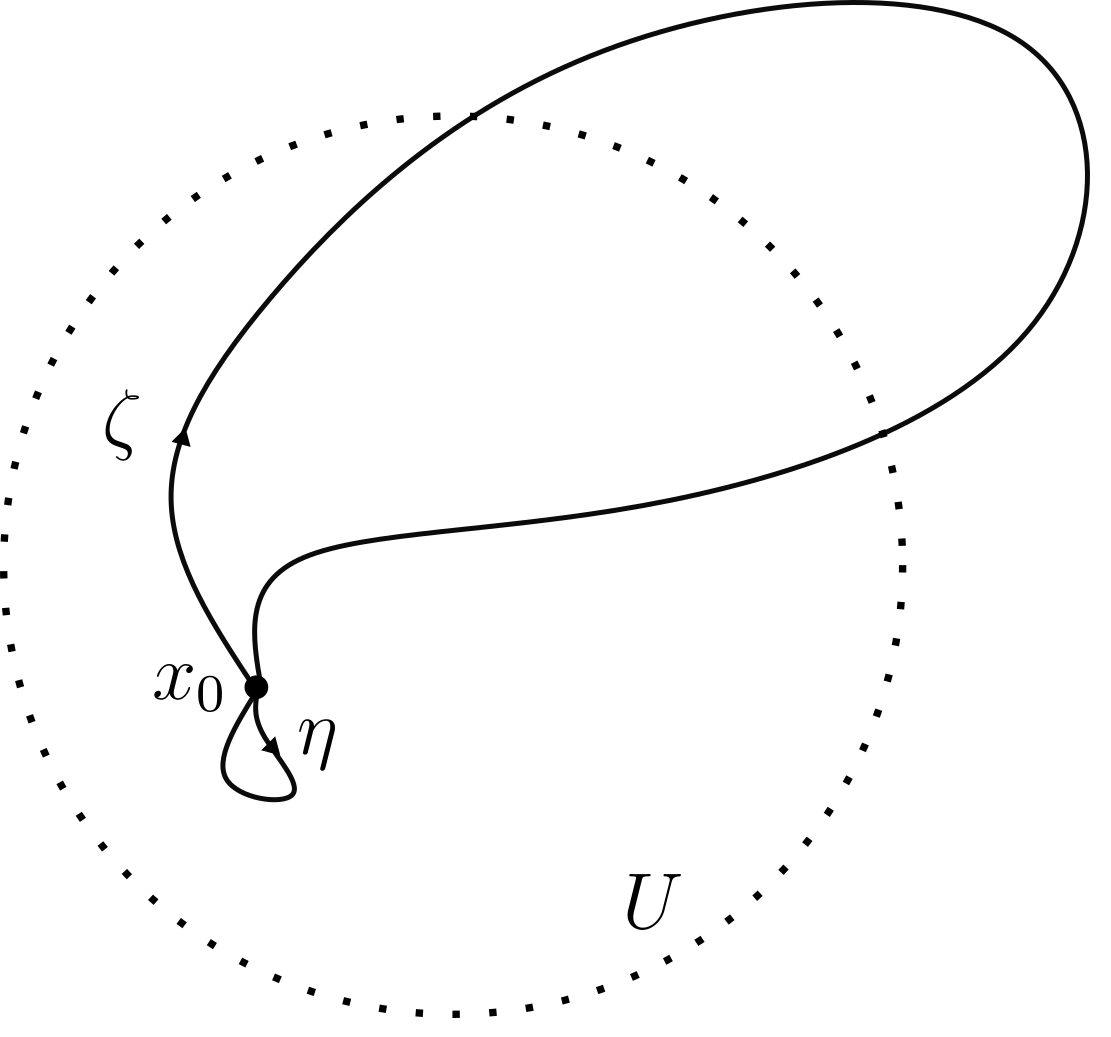}
  \caption{A basic neighborhood $B([\zeta],U)$ of $\pi_1^{wh}(X,x_0)$ consists of homotopy classes $[\zeta \cdot \eta]$ where $\eta$ is a loop in $U$.}

\end{figure}

\begin{defi}
\cite{archangel}: A group $G$ equipped with a topology is a \emph{left-topological group} if for every $g \in G$, left translation $h \mapsto hg$ is a continuous function $G \to G$.

\end{defi}

\begin{prp}
Let $X$ be any space; then, $\pi_1^{wh}(X,x_0)$ is a left topological group.

\end{prp}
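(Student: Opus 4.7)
The plan is to fix an arbitrary $[\gamma] \in \pi_1(X,x_0)$ and verify that left translation $L_{[\gamma]}:\pi_1^{wh}(X,x_0) \to \pi_1^{wh}(X,x_0)$, $[\alpha] \mapsto [\gamma \cdot \alpha]$, is continuous. Since the whisker topology comes with the explicit basis $\{B([\zeta],U)\}$, it suffices to check continuity pointwise, exhibiting for each basic open neighborhood of the image a basic open neighborhood of the source that maps into it.

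Concretely, I would fix $[\alpha_0] \in \pi_1(X,x_0)$ together with a basic open neighborhood $B([\gamma \cdot \alpha_0], U)$ of $L_{[\gamma]}([\alpha_0]) = [\gamma \cdot \alpha_0]$, where $U$ is an open neighborhood of $x_0$. The claim to verify is simply that $L_{[\gamma]}(B([\alpha_0],U)) \subseteq B([\gamma \cdot \alpha_0], U)$: any element of $B([\alpha_0], U)$ has the form $[\alpha_0 \cdot \eta]$ for some loop $\eta$ in $U$ at $x_0$, and then $L_{[\gamma]}([\alpha_0 \cdot \eta]) = [\gamma \cdot \alpha_0 \cdot \eta]$ is, by definition, an element of $B([\gamma \cdot \alpha_0], U)$. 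Since $[\alpha_0]$ and the target neighborhood were arbitrary, $L_{[\gamma]}$ is continuous at every point and hence continuous.

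There is essentially no obstacle; the argument is a direct unwinding of the definitions. The conceptual point worth highlighting is structural: the basic neighborhoods of the whisker topology are defined by perturbations \emph{appended} at the endpoint of a representative path, while left multiplication \emph{prepends} the fixed loop $\gamma$, so the two operations do not interfere. The analogous attempt with right translation $[\alpha] \mapsto [\alpha \cdot \gamma]$ would instead push the appended perturbation into the interior of the resulting concatenation, with no control on how a small loop in $U$ interacts with the potentially large path $\gamma$. This asymmetry is precisely the reason $\pi_1^{wh}(X,x_0)$ is in general only left-topological and not a full topological group, and it is what makes the one-line verification above go through.
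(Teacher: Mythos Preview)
Your proof is correct and follows essentially the same route as the paper: both verify continuity of left translation directly from the basis, using the observation that prepending a fixed loop $\gamma$ sends $B([\alpha_0],U)$ into $B([\gamma\cdot\alpha_0],U)$. The paper phrases it via the preimage $\lambda_{[\eta]}^{-1}(B([\zeta],U)) = B([\eta^{-}\cdot\zeta],U)$ and additionally records that left translation is open (hence a homeomorphism), but this is the same one-line computation you give, just read in the other direction.
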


\begin{proof}
Let $[\eta] \in \pi_1^{wh}(X,x_0)$ be fixed and consider the left translation $\lambda_{[\eta]}: \pi_1^{wh}(X,x_0) \to \pi_1^{wh}(X,x_0)$ which is defined as $\lambda_{[\eta]}([\kappa]) = [\eta][\kappa]$; then $\lambda_{[\eta]}$ is continuous since $\lambda_{[\eta]}^{-1}(B([\zeta],U)) = B([\eta^{-} \cdot \zeta],U)$. To see $\lambda_{[\eta]}$ is a homeomorphism, let $U$ be any neighborhood of $x_0$ and observe that $\lambda_{[\eta]}(B([\zeta],U) = B([\eta \cdot \zeta],U)$.

\end{proof}

\begin{cor}
  \cite{rashid}: $\pi_1^{wh}(X,x_0)$ is a homogeneous space.
\end{cor}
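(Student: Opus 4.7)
The plan is to leverage the preceding proposition directly: since $\pi_1^{wh}(X,x_0)$ is a left-topological group in which every left translation $\lambda_{[\eta]}$ is actually a homeomorphism (the proof of the proposition establishes both continuity via $\lambda_{[\eta]}^{-1}(B([\zeta],U)) = B([\eta^{-} \cdot \zeta],U)$ and invertibility by explicitly producing $\lambda_{[\eta^{-}]}$ as its two-sided inverse), homogeneity should follow from a single application of left translation.

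Concretely, I would argue as follows. Let $[\alpha], [\beta] \in \pi_1^{wh}(X,x_0)$ be arbitrary, and set $[\eta] = [\beta \cdot \alpha^{-}]$, so that $[\eta][\alpha] = [\beta]$ in $\pi_1(X,x_0)$. Then $\lambda_{[\eta]} : \pi_1^{wh}(X,x_0) \to \pi_1^{wh}(X,x_0)$ is a self-homeomorphism of the space by the proposition, and by construction it sends $[\alpha]$ to $[\beta]$. Since $[\alpha]$ and $[\beta]$ were arbitrary, this witnesses homogeneity.

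There is no real obstacle here; the content of the corollary is essentially a restatement of the proposition together with the standard observation that any group whose left translations are homeomorphisms is automatically topologically homogeneous. The only point to double-check is that the inverse $[\alpha^{-}] = [\alpha]^{-1}$ in $\pi_1(X,x_0)$ is indeed available (which it is, since we are working in the group $\pi_1(X,x_0)$ itself, not merely in the larger set $\widetilde{X}$), so that the element $[\eta] = [\beta \cdot \alpha^{-}]$ truly lies in $\pi_1^{wh}(X,x_0)$ and the left translation by it is a legitimate self-map of the fundamental group.
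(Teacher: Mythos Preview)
Your argument is correct and is exactly the intended one: the paper states the corollary without proof because it follows immediately from the preceding proposition, and your proof spells out precisely that immediate deduction via left translation by $[\beta\cdot\alpha^{-}]$. The only minor discrepancy is that the paper establishes $\lambda_{[\eta]}$ is a homeomorphism by showing it is open (via $\lambda_{[\eta]}(B([\zeta],U)) = B([\eta\cdot\zeta],U)$) rather than by exhibiting $\lambda_{[\eta^{-}]}$ as its inverse, but this makes no difference to the corollary.
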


\begin{prp}
If $f:(X,x_0) \to (Y,y_0)$ is a based map, then $f_{\#}: \pi_1^{wh}(X,x_0) \to \pi_1^{wh}(Y,y_0)$ is continuous.

\end{prp}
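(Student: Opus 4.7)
The plan is to verify continuity of $f_{\#}$ pointwise by exhibiting, for each basic open neighborhood of $f_{\#}([\alpha])$ in the target, a basic open neighborhood of $[\alpha]$ in the source that maps inside it. A basic neighborhood of $f_{\#}([\alpha])$ has the form $B([\zeta],V)$ with $V$ an open neighborhood of $y_0$ in $Y$ and $f_{\#}([\alpha])\in B([\zeta],V)$. The first step I would take is a small bookkeeping lemma: whenever $[\beta]\in B([\zeta],V)$, one has $B([\beta],V)=B([\zeta],V)$. This follows directly from the definition, since if $[\beta]=[\zeta\cdot\eta_0]$ with $\img(\eta_0)\subseteq V$, then concatenating loops in $V$ with $\eta_0$ or $\eta_0^{-}$ produces loops whose images still lie in $V$. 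In particular, I can replace the ``base path'' $\zeta$ by $f\circ\alpha$ and assume $B([\zeta],V)=B([f\circ\alpha],V)$.

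Next, using continuity of $f$, I set $U=f^{-1}(V)$, which is an open neighborhood of $x_0$ in $X$ since $f(x_0)=y_0\in V$. I then claim that
\[
 f_{\#}\bigl(B([\alpha],U)\bigr)\subseteq B([f\circ\alpha],V),
\]
which establishes continuity at $[\alpha]$. To verify this, take an arbitrary element of $B([\alpha],U)$, which by definition has a representative of the form $\alpha\cdot\eta$ with $\img(\eta)\subseteq U$. Applying $f_{\#}$ yields $[f\circ(\alpha\cdot\eta)]=[(f\circ\alpha)\cdot(f\circ\eta)]$, and since $\img(\eta)\subseteq U=f^{-1}(V)$ we have $\img(f\circ\eta)\subseteq V$. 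Thus the image lies in $B([f\circ\alpha],V)$ by definition, as required.

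Since $[\alpha]\in\pi_1(X,x_0)$ was arbitrary, this gives continuity at every point, which suffices. (Alternatively, one could check continuity only at the identity: the identity $f_{\#}\circ\lambda_{[\alpha]}=\lambda_{f_{\#}([\alpha])}\circ f_{\#}$ combined with Proposition above—that left translations are homeomorphisms—reduces continuity at a general point to continuity at $[c_{x_0}]$, where the same $U=f^{-1}(V)$ argument applies.) There is no real obstacle here: the proof is essentially a direct unwrapping of definitions, with the only non-trivial ingredient being the representative-independence of basic neighborhoods, which is itself immediate from the closure of ``loops in $V$'' under inverse and concatenation.
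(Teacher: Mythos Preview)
Your proof is correct and follows essentially the same approach as the paper: take $U=f^{-1}(V)$ and verify directly that $f_{\#}(B([\alpha],U))\subseteq B([f\circ\alpha],V)$. The only difference is that you explicitly invoke the representative-independence of basic neighborhoods (which the paper later records as Lemma~4.1) to recenter the target neighborhood at $[f\circ\alpha]$, whereas the paper's write-up leaves this step implicit.
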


\begin{proof}
  Let $B([f \circ \zeta],V)$ be open in $\pi_1^{wh}(Y,y_0)$ where $V$ is open in $Y$ and $[\zeta] \in \pi_1^{wh}(X,x_0)$ with $f_{\#}([\zeta]) = [\eta \cdot \kappa]$ where $\kappa$ has image in $V$. Since $f$ is continuous, we have that $U = f^{-1}(V)$ is open in $X$ and since  $[\zeta] = [\eta \cdot \kappa]$ where $\kappa$ has image in $U$ we see that $f_{\#}([\zeta]) = [(f \circ \eta) \cdot (f \circ \kappa)]$ where $[f \circ \kappa]$ will have image in $V$. This implies that $f_{\#}(B([\zeta],U) \subseteq B([f_{\#}(\zeta)],V)$.

\end{proof}

\begin{cor}
$\pi_1^{wh}$ is a functor from the category of based topological spaces to the category of left topological groups and continuous homeomorphisms.

\end{cor}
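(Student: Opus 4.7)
The plan is to verify the four functorial axioms for $\pi_1^{wh}$ regarded as an assignment from based topological spaces to left topological groups with continuous homomorphisms as morphisms (reading the statement's ``continuous homeomorphisms'' as a typo for ``continuous homomorphisms''): (i) the object assignment lands in the target category; (ii) the morphism assignment lands in the target category; (iii) identities are preserved; (iv) composition is preserved. Axiom (i) is immediate from the earlier proposition showing that $\pi_1^{wh}(X,x_0)$ is a left topological group for every based space $(X,x_0)$. Axiom (ii) reduces to two facts: that $f_\#$ is a group homomorphism, which is the classical argument for the untopologized fundamental group (the image $f\circ(\alpha \cdot \beta)$ of a concatenation is the concatenation of images), and that $f_\#$ is continuous in the whisker topology, which is the immediately preceding proposition.

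For axiom (iii) I would observe that if $\mathrm{id}_X:(X,x_0) \to (X,x_0)$ is the identity map, then $(\mathrm{id}_X)_\#[\zeta] = [\mathrm{id}_X \circ \zeta] = [\zeta]$ for every $[\zeta]\in \pi_1^{wh}(X,x_0)$, so $(\mathrm{id}_X)_\#$ is the identity morphism in the target category. For axiom (iv), given based maps $f:(X,x_0)\to(Y,y_0)$ and $g:(Y,y_0)\to(Z,z_0)$, associativity of function composition yields $(g \circ f)_\#[\zeta] = [(g\circ f)\circ \zeta] = [g \circ (f\circ \zeta)] = g_\#(f_\#[\zeta])$, so $(g\circ f)_\# = g_\# \circ f_\#$. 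Both of these coherence identities are insensitive to the choice of topology on $\pi_1$ because they hold at the set-theoretic level.

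The main obstacle is essentially nil: all of the genuine content, namely the left-topological-group structure on the target and the continuity of induced homomorphisms, has already been established in the two preceding propositions, and the remaining algebraic verifications are identical to those for the ordinary fundamental group functor. Accordingly, my plan is simply to enumerate the four axioms and to cite the appropriate earlier statements, treating the corollary as a categorical repackaging of what has already been proved.
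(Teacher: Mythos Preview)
Your proposal is correct and matches the paper's approach: the paper states this corollary without proof, treating it as an immediate consequence of the two preceding propositions (that $\pi_1^{wh}(X,x_0)$ is a left topological group and that $f_{\#}$ is continuous), together with the standard functoriality of the untopologized $\pi_1$. Your write-up simply makes explicit the routine verification the paper leaves to the reader.
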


\begin{prp}
  Let $\{(X_{i},x_{i}\}_{i \in I}$ be a collection of based spaces. Then the canonical group isomorphism:

  $$\Phi:\pi_1^{wh}\big(\prod_{i \in I} (X_{i},x_{i}) \big) \to \prod_{i \in I} \pi_1^{wh}(X_{i},x_{i})  $$

  is a homeomorphism.
  
\end{prp}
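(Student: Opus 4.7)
The plan is to show $\Phi$ is a continuous open bijection. That $\Phi$ is a group isomorphism is standard, and continuity is essentially free: each projection $p_j : \prod_{i \in I} X_i \to X_j$ induces a continuous homomorphism $(p_j)_{\#} : \pi_1^{wh}\bigl(\prod_{i \in I}(X_i,x_i)\bigr) \to \pi_1^{wh}(X_j,x_j)$ by the functoriality proposition proved above, and $\Phi$ is precisely the map whose $j$th coordinate is $(p_j)_{\#}$. The universal property of the product topology then gives continuity. All real content therefore lies in checking that $\Phi$ is open.

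To show $\Phi$ is open, I will prove that every basic open set $B([\zeta],U)$ in $\pi_1^{wh}\bigl(\prod_{i \in I}(X_i,x_i)\bigr)$ maps to an open set. Fix $[\zeta \cdot \eta] \in B([\zeta],U)$. Since $U$ is a neighborhood of $(x_i)_{i \in I}$ in the product topology, I may shrink and assume $U = \prod_{i \in I} V_i$ where each $V_i$ is open in $X_i$ and $V_i = X_i$ for all $i$ outside some finite set $F \subseteq I$. The candidate basic open neighborhood of $\Phi([\zeta \cdot \eta])$ in $\prod_{i \in I} \pi_1^{wh}(X_i,x_i)$ is
$$W \;=\; \prod_{i \in F} B\bigl([\,p_i \circ \zeta \cdot p_i \circ \eta\,],V_i\bigr) \;\times\; \prod_{i \notin F} \pi_1^{wh}(X_i,x_i).$$
The target equality I aim for is $W \subseteq \Phi(B([\zeta],U))$.

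To verify this, take an arbitrary $([\alpha_i])_{i \in I} \in W$. For $i \in F$ write $[\alpha_i] = [\,p_i \circ \zeta \cdot p_i \circ \eta \cdot \mu_i\,]$ for some loop $\mu_i$ at $x_i$ with $\img(\mu_i) \subseteq V_i$; for $i \notin F$ choose any representative $\alpha_i$. Define a loop $\gamma : I \to \prod_{i \in I} X_i$ coordinatewise by setting $p_i \circ \gamma = p_i \circ \eta \cdot \mu_i$ for $i \in F$ and $p_i \circ \gamma = (p_i \circ \zeta)^{-} \cdot \alpha_i$ for $i \notin F$. Since products of continuous functions into each coordinate are continuous, $\gamma$ is a loop at $(x_i)_{i \in I}$, and its image lies in $U$ because for $i \in F$ both $p_i \circ \eta$ and $\mu_i$ stay in $V_i$ while for $i \notin F$ one has $V_i = X_i$. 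Then $[\zeta \cdot \gamma] \in B([\zeta],U)$ and a direct computation shows $\Phi([\zeta \cdot \gamma]) = ([\alpha_i])_{i \in I}$, completing the inclusion.

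The only genuinely delicate step is the coordinatewise assembly of $\gamma$: one has to pick representatives of the homotopy classes $[\alpha_i]$ (respectively $[p_i \circ \eta \cdot \mu_i]$) that actually sit inside $V_i$. This is exactly where the shape $U = \prod V_i$ with $V_i = X_i$ off a finite set is essential, since it gives total freedom in the coordinates outside $F$, where no image constraint is imposed; in the finitely many coordinates inside $F$ the natural concatenated representatives already lie in $V_i$. Once $\gamma$ is in place, the rest is formal, and the combination of continuity with this openness argument yields the desired homeomorphism.
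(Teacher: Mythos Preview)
Your argument is essentially the paper's: continuity via functoriality and the universal property of the product, openness by computing the image of a basic set $B([\zeta],U)$ with $U$ a standard product-form neighborhood.

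There is one small logical slip in the ordering. You fix $[\zeta\cdot\eta]\in B([\zeta],U)$ \emph{first} and only afterwards ``shrink and assume $U=\prod_i V_i$''; but later you use that $p_i\circ\eta$ has image in $V_i$ for $i\in F$, which is only guaranteed if $\eta$ already had image in the shrunken $U$. As written, after shrinking there is no reason $[\zeta\cdot\eta]$ still lies in $B([\zeta],\prod V_i)$, so your constructed $\gamma$ need not land in the new $U$. The repair is immediate: since the sets $B([\zeta],\prod V_i)$ with $V_i=X_i$ off a finite set already form a basis for the whisker topology, simply assume $U$ has this form from the outset and \emph{then} choose $[\zeta\cdot\eta]$ (so $\operatorname{Im}(p_i\circ\eta)\subseteq V_i$ automatically); alternatively, note $B([\zeta],U)=B([\zeta\cdot\eta],U)$ and take $\eta$ constant. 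With that reordering your proof is correct and matches the paper's.
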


\begin{proof}
  Let $p_i: \prod_{i \in I} X_i \to X_j$ be the $j$-th projection map. Then this induces a continuous homomorphism $(p_j)_{\#}:\pi_1^{wh} \bigg( \prod_{i \in I} (X_i,x_i) \bigg) \to \pi_1^{wh}(X_j,x_j)$. Then together, these maps induce $\Phi$ by $\Phi([\zeta]) = ([p_j \circ \zeta])$. Consequently, the universal property of products ensure that $\Phi$ is continuous. It will suffice to show that $\Phi$ is open. Let $B([\zeta],U)$ be a basic open neighborhood of $[\zeta] = [(\zeta_j)_i]$ in $\pi_1^{wh} \bigg( \prod_{i \in I} (X_i,x_i) \bigg)$. As $\prod_{i \in I} X_i$ is endowed with the product topology we can assume that $U$ has the form:

  $$U = \prod_{i \in F} U_i \times \prod_{i \in I \setminus F} X_i$$

  where $F$ is a finite subset of $I$ and $U_i$ are open neighborhoods of $x_i$ for each $X_i$. Then we claim that:

  $$\Phi(B([\zeta],U)) = \prod_{i \in F} B([\zeta_i],U_i) \times \prod_{i \in I \setminus F} \pi_1^{wh}(X_i,x_i).$$

Indeed, if $[\eta] \in B([\zeta],U)$, write $[\eta] = [\zeta \cdot \kappa], \kappa:I \to U$. In addition, write $\eta = (\eta_i)$ and $\kappa = (\kappa_i)$ with $\img(\kappa_i) \subseteq U_i$ whenever $i \in F$. It follows that $\Phi([\eta]) = ([\zeta_i \cdot \kappa_i]), i \in \N$. Conversely, if $([\eta_i]), i \in \N$, then we have that $([\eta]_i) = [\zeta_i \cdot \kappa_i]$ for loops $\kappa_i:I \to U_i$ when $i \in F$. Now, let $\kappa_i$ be constant at $x_i$ when $i \in F$. Then $(\kappa_i)_{i \in I}:I \to \prod X_i$ has image in $U$ and thus $([\eta_i]) = [(\zeta_i \cdot \kappa_i)] \in B([\zeta],U)$. Hence, we have $([\eta_i]) = \Phi([\eta_i]) \in \Phi(B([\zeta],U))$.

\end{proof}

\begin{thm}
Let $\pi_1^{wh}(X,x_0)$ be the whisker topology on any space $X$. Then the following are equivalent:

\begin{enumerate}
\item Group inversion $\inv: \pi_1^{wh}(X,x_0) \to \pi_1^{wh}(X,x_0)$ is continuous.
\item $\pi_1^{wh}(X,x_0)$ is a topological group.
\item For every $[\eta] \in \pi_1^{wh}(X,x_0)$, conjugation: $c_{[\eta]}:\pi_1^{wh}(X,x_0) \to \pi_1^{wh}(X,x_0), c_{[\eta]}([\kappa]) = [\eta][\kappa][\eta^{-}]$ is continuous.

\end{enumerate}

\begin{proof}
  For $1 \Rightarrow 2$ suppose that group inversion is continuous. Then given $[\zeta],[\eta] \in \pi_1^{wh}(X,x_0)$ it will suffice to show that $([\zeta],[\eta]) \mapsto [\zeta \cdot \eta]$ is continuous. Let $U$ be any neighborhood of $x_0 \in X$ such that $B([\zeta \cdot \eta],U)$ is a basic open neighborhood of $[\zeta \cdot \eta]$. By hypothesis, inversion is continuous and so is continuous at $[\eta^{-}]$. Then we can find an open neighborhood $V$ of $x_0$ such that $V \subseteq U$ and $\inv(B([\eta^{-},V)) \subseteq B([\eta],U)$. We will now demonstrate that group multiplication will map $B([\zeta],V) \times B([\eta],V)$ into $B([\zeta \cdot \eta],U)$. Let $[\zeta \cdot \kappa] \in B([\zeta],V), [\eta \cdot \mu] \in B([\eta],V)$ for loops $\kappa, \mu$ in $V$. Now, $\img (k^{-}) \subseteq V$ and so $[\kappa][\eta] = [\eta][\iota]$ for $\img(\iota) \subseteq U$. Then we have that:

  $$[\zeta \cdot \kappa][\eta \cdot \mu] = [\zeta][\kappa][\eta][\mu] = [\zeta][\eta][\iota][\mu] = [\zeta \cdot \eta][\iota \cdot \mu]$$

  where $\img(\iota \cdot \mu) \subseteq U$. Then this implies that $[\zeta \cdot \kappa][\eta \cdot \mu]$ lies in $B([\zeta \cdot \eta],U))$.

  2. $\Rightarrow$ 3. is an immediate consequence of topological group theory; it suffices to show that (3.) $\Rightarrow$ (1.). Indeed, suppose conjugation is continuous in $\pi_1^{wh}(X,x_0)$. Let $[\eta] \in \pi_1^{wh}(X,x_0)$, it will suffice to show that inversion is continuous at $[\eta]$. Let $U$ be an open neighborhood of $x_0$ in $X$ so that $B([\eta^{-}],U)$ is an open neighborhood of $\inv([\eta]) = [\eta^{-}]$. By hypothesis, conjugation $c_{[\eta]}$ is continuous at the identity element. Then we can find a neighborhood $V$ of $x_0$ such that $[\eta]B([1,V)[\eta^{-}] \subseteq B(1,U)$. That is, if $\iota$ is an loop in $V$ which is based at $x_0$, then there exists a loop $\kappa$ in $U$ such that: $[\eta][\iota][\eta^{-}] = [\kappa]$. Now, it will suffice to show that $\inv(B([\eta],V)) \subseteq B([\eta^{-}],U)$. Let, $\iota$ be a loop in $V$ based at $x_0$; clearly $\iota^{-}$ is in $V$. Then we can find a loop $\kappa$ in $U$ such that $[\eta][\iota^{-}][\eta^{-}] = [\kappa]$. That is, $\inv([\eta \cdot \iota]) = [\iota^{-} \cdot \eta^{-}] = [\eta^{-}][\kappa] \in B([\eta^{-}],U)$.
\end{proof}

\end{thm}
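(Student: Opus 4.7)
The plan is to prove the cycle $(1) \Rightarrow (2) \Rightarrow (3) \Rightarrow (1)$. The middle arrow $(2) \Rightarrow (3)$ is immediate from general topological-group theory, since conjugation $c_{[\eta]}$ factors as a composition of left and right translations. The other two implications carry the real content, and both leverage the fact that $\pi_1^{wh}(X,x_0)$ is already a left-topological group (by the earlier proposition), so one only needs to control how a ``new'' small loop introduced on the right interacts with fixed path-homotopy classes on the left.

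For $(1) \Rightarrow (2)$, I would fix $[\zeta], [\eta]$ and a basic neighborhood $B([\zeta \cdot \eta], U)$ of the product. Continuity of inversion at $[\eta^{-}]$ yields an open neighborhood $V \subseteq U$ of $x_0$ with $\inv(B([\eta^{-}], V)) \subseteq B([\eta], U)$. I would then claim $B([\zeta], V) \cdot B([\eta], V) \subseteq B([\zeta \cdot \eta], U)$. Given $[\zeta \cdot \kappa] \in B([\zeta], V)$ and $[\eta \cdot \mu] \in B([\eta], V)$ for loops $\kappa, \mu$ in $V$, the key step is to observe $[\eta^{-} \cdot \kappa^{-}] \in B([\eta^{-}], V)$ and apply $\inv$ to obtain $[\kappa \cdot \eta] \in B([\eta], U)$. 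This produces a loop $\iota$ in $U$ with $[\kappa][\eta] = [\eta][\iota]$, after which $[\zeta][\kappa][\eta][\mu] = [\zeta \cdot \eta][\iota \cdot \mu]$ with $\iota \cdot \mu$ a loop in $U$, as required.

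For $(3) \Rightarrow (1)$, to verify inversion is continuous at an arbitrary $[\eta]$, I would take a basic neighborhood $B([\eta^{-}], U)$ of $\inv([\eta])$. Continuity of $c_{[\eta]}$ at the identity supplies an open neighborhood $V$ of $x_0$ such that for every loop $\iota$ in $V$, $[\eta][\iota][\eta^{-}] = [\kappa]$ for some loop $\kappa$ in $U$. I would then verify $\inv(B([\eta], V)) \subseteq B([\eta^{-}], U)$ directly: for $[\eta \cdot \iota] \in B([\eta], V)$, the reverse loop $\iota^{-}$ also lies in $V$, so the hypothesis applied to $\iota^{-}$ gives $[\eta][\iota^{-}][\eta^{-}] = [\kappa]$ with $\kappa \subseteq U$, whence $\inv([\eta \cdot \iota]) = [\iota^{-} \cdot \eta^{-}] = [\eta^{-} \cdot \kappa] \in B([\eta^{-}], U)$.

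The main obstacle is the implication $(1) \Rightarrow (2)$, since joint continuity of multiplication requires simultaneously controlling two small loops meeting at the shared midpoint $\zeta(1) = \eta(0)$. The crux is the commutation identity $[\kappa][\eta] = [\eta][\iota]$ with $\iota \subseteq U$, extracted from inversion continuity by applying $\inv$ to $[\eta^{-} \cdot \kappa^{-}]$; this identity lets a small loop ``slide past'' $[\eta]$ at the cost of inflating only into the prescribed $U$. Implication $(3) \Rightarrow (1)$ is comparatively easier because conjugation continuity hands us precisely this kind of commutation at the identity for free.
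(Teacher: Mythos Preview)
Your proposal is correct and follows essentially the same route as the paper's proof: the same cycle $(1)\Rightarrow(2)\Rightarrow(3)\Rightarrow(1)$, the same use of continuity of inversion at $[\eta^{-}]$ to obtain $V\subseteq U$ with $\inv(B([\eta^{-}],V))\subseteq B([\eta],U)$ and the resulting commutation $[\kappa][\eta]=[\eta][\iota]$, and the same use of continuity of $c_{[\eta]}$ at the identity for $(3)\Rightarrow(1)$. If anything, your write-up is slightly more explicit than the paper's, since you spell out why $[\eta^{-}\cdot\kappa^{-}]\in B([\eta^{-}],V)$ is the element to which one applies $\inv$.
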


\begin{cor}
$\pi_1^{wh}(X,x_0)$ is a topological group if and only if for every loop $\zeta$ based at $x_0$ and neighborhood $U$ of $x_0$, there exists a neighborhood $V$ of $x_0$ such that for any loop $\eta$ based at $x_0$  in $V$, the conjugate $\zeta \cdot \eta \cdot \zeta^{-}$ is homotopic to some loop in $U$.

\end{cor}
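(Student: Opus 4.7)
The plan is to read the corollary as a direct reformulation of the implication (2)$\Leftrightarrow$(3) in the preceding theorem. By that theorem, $\pi_1^{wh}(X,x_0)$ is a topological group if and only if every conjugation map $c_{[\zeta]}:\pi_1^{wh}(X,x_0) \to \pi_1^{wh}(X,x_0)$ is continuous. Consequently, the only real work is translating the continuity of each $c_{[\zeta]}$ into elementary language about loops and neighborhoods of $x_0$.

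First, I would reduce continuity of $c_{[\zeta]}$ everywhere to continuity of $c_{[\zeta]}$ at the identity $[c_{x_0}]$. Since $c_{[\zeta]}$ is a group homomorphism, it satisfies the identity
\[
c_{[\zeta]} \circ \lambda_{[\kappa]} \;=\; \lambda_{c_{[\zeta]}([\kappa])} \circ c_{[\zeta]}
\]
for every $[\kappa]\in \pi_1^{wh}(X,x_0)$. Because left translations are homeomorphisms (Proposition 3.3), we may pre- and post-compose with $\lambda_{[\kappa^{-}]}$ and $\lambda_{c_{[\zeta]}([\kappa])}$ without affecting continuity, so continuity of $c_{[\zeta]}$ at an arbitrary $[\kappa]$ is equivalent to continuity at $[c_{x_0}]$.

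Second, I would unpack what continuity of $c_{[\zeta]}$ at $[c_{x_0}]$ says in terms of basic open sets. A basic open neighborhood of $[c_{x_0}]=c_{[\zeta]}([c_{x_0}])$ in $\pi_1^{wh}(X,x_0)$ has the form $B([c_{x_0}],U)$ where $U$ is an open neighborhood of $x_0$, and its elements are precisely homotopy classes $[\mu]$ with $\mu$ a loop at $x_0$ whose image lies in $U$. Continuity at the identity then becomes: for every neighborhood $U$ of $x_0$ there exists a neighborhood $V$ of $x_0$ such that for every loop $\eta$ in $V$ based at $x_0$, the class $[\zeta \cdot \eta \cdot \zeta^{-}]$ lies in $B([c_{x_0}],U)$, i.e., $\zeta\cdot\eta\cdot\zeta^{-}$ is path-homotopic to some loop in $U$. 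This is exactly the condition in the corollary, and running the translation in the opposite direction gives the converse.

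I do not expect a substantive obstacle; the argument is essentially bookkeeping once the theorem is in hand. The only subtlety is making sure that the reduction to continuity at the identity is legitimate in a merely left-topological group, which is what the homomorphism identity above is designed to handle.
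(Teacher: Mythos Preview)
Your proposal is correct and matches the paper's intent: the corollary is stated without proof as an immediate consequence of the preceding theorem, and your argument is precisely the natural unpacking of condition~(3) (continuity of each $c_{[\zeta]}$) in terms of basic neighborhoods of the identity. The reduction to continuity at the identity via the identity $c_{[\zeta]}\circ\lambda_{[\kappa]}=\lambda_{c_{[\zeta]}([\kappa])}\circ c_{[\zeta]}$ is the only point the paper leaves entirely implicit, and you handle it correctly.
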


\begin{figure}[h!]
  \centering
  \includegraphics[scale=0.5]{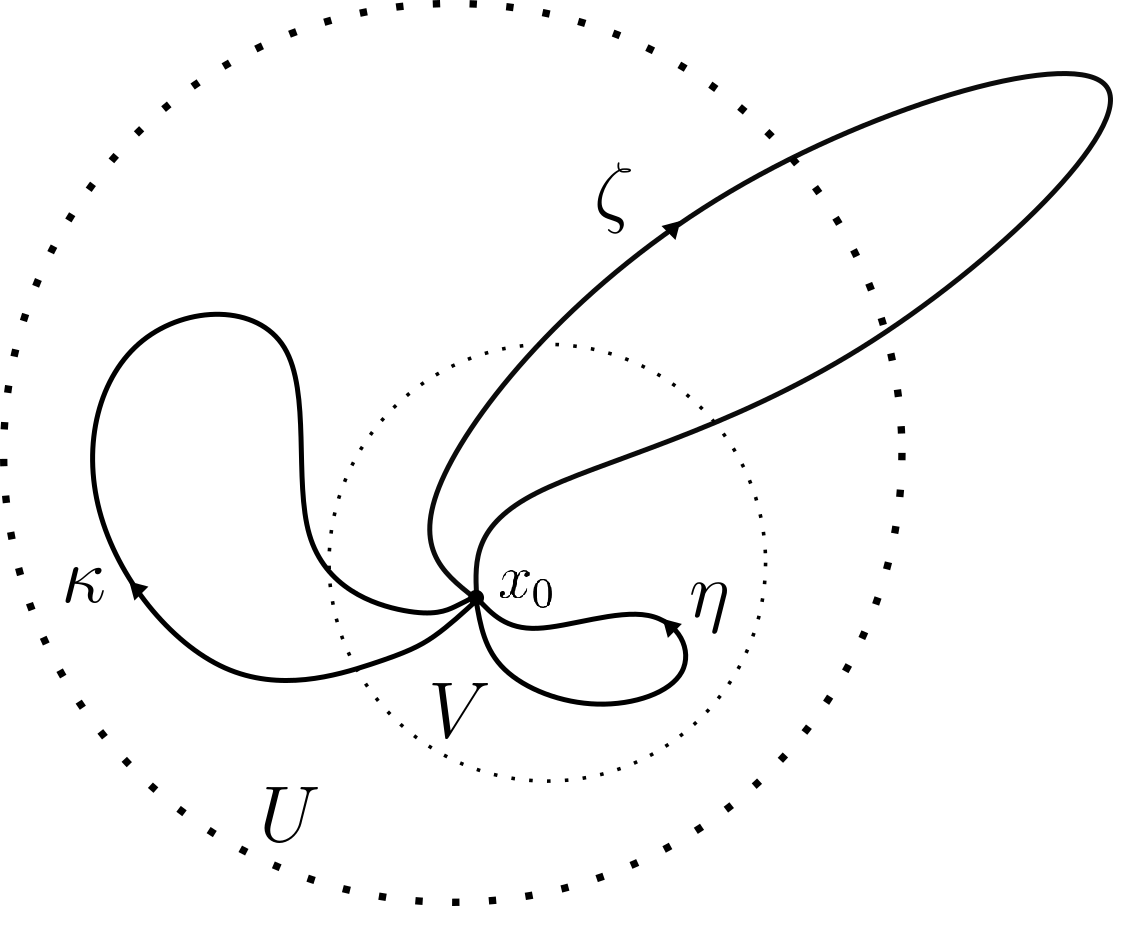}
  \caption{$\pi_1^{wh}(X,x_0)$ is a topological group if and only if for every based loop $\zeta$ and neighborhood $U$ of $x_0$ there exists a neighborhood $V$ of $x_0$ such that for any loop $\eta$ in $V$, the conjugate $\zeta \cdot \eta \cdot \zeta^{-}$ is homotopic to some loop $\kappa$ in $U$.}

\end{figure}

\begin{cor}
  If $\pi_1(X,x_0)$ is abelian, then $\pi_1^{wh}(X,x_0)$ is a topological group.

\end{cor}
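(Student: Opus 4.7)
The plan is to derive this as a direct application of the previous theorem, which gives three equivalent characterizations of when $\pi_1^{wh}(X,x_0)$ is a topological group. Rather than verifying continuity of multiplication or inversion from scratch, I would verify condition (3): that conjugation $c_{[\eta]}$ is continuous for every $[\eta] \in \pi_1(X,x_0)$. This is the path of least resistance because the abelian hypothesis collapses conjugation entirely.

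Concretely, fix any $[\eta] \in \pi_1(X,x_0)$ and observe that for every $[\kappa] \in \pi_1(X,x_0)$ we have $c_{[\eta]}([\kappa]) = [\eta][\kappa][\eta^{-}] = [\kappa][\eta][\eta^{-}] = [\kappa]$ by commutativity, so $c_{[\eta]}$ is the identity map on $\pi_1^{wh}(X,x_0)$. The identity map is trivially continuous, so condition (3) of the preceding theorem holds for every $[\eta]$, and the equivalence $(3) \Rightarrow (2)$ established there immediately gives that $\pi_1^{wh}(X,x_0)$ is a topological group. There is no real obstacle here — the entire content of the corollary is that the theorem has already done the work, and abelianness is exactly the hypothesis that trivializes condition (3).
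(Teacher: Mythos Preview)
Your proposal is correct and matches the paper's intended argument: the corollary is stated without proof immediately after the theorem characterizing when $\pi_1^{wh}(X,x_0)$ is a topological group, and the obvious (and only reasonable) reading is exactly what you wrote --- abelianness makes every conjugation the identity, so condition (3) holds trivially.
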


\begin{prp}
$\pi_1^{wh}(X,x_0)$ is discrete if and only if $X$ is semilocally-simply connected at $x_0$.

\end{prp}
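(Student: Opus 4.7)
The plan is to reduce both directions to an analysis of basic open neighborhoods of the identity element $1 = [c_{x_0}] \in \pi_1^{wh}(X,x_0)$. The key observation I would isolate first is that for any open neighborhood $U$ of $x_0$, the basic open set $B([c_{x_0}], U)$ coincides exactly with the image $i_{\#}(\pi_1(U,x_0))$ of the inclusion-induced homomorphism, since $B([c_{x_0}], U)$ is precisely the set of homotopy classes of loops whose image lies in $U$. This dictionary is what connects the whisker topology to the semilocally simply connected condition.

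For the direction ``semilocally simply connected $\Rightarrow$ discrete,'' I would pick a neighborhood $U$ of $x_0$ witnessing the property, so that $i_{\#}(\pi_1(U,x_0)) = \{1\}$. Then $B([c_{x_0}],U) = \{1\}$, showing that the singleton $\{1\}$ is open in $\pi_1^{wh}(X,x_0)$. Invoking Corollary 3.3 (homogeneity of $\pi_1^{wh}(X,x_0)$), every singleton is then open, and discreteness follows.

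For the converse, assume $\pi_1^{wh}(X,x_0)$ is discrete, so that $\{1\}$ is open. Then there exists a basic open set $B([\zeta],U)$ with $1 \in B([\zeta],U) \subseteq \{1\}$. I would then plug in the constant loop $\eta = c_{x_0}$, whose image trivially lies in $U$; this forces $[\zeta] = [\zeta \cdot c_{x_0}] \in B([\zeta],U) = \{1\}$, so $[\zeta] = 1$. Consequently $B([c_{x_0}],U) = \{1\}$, which by the initial observation means $i_{\#}(\pi_1(U,x_0)) = \{1\}$, i.e., every loop in $U$ is nullhomotopic in $X$. This is precisely the semilocal simple connectivity of $X$ at $x_0$.

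I do not anticipate a real obstacle: once the identification $B([c_{x_0}],U) = i_{\#}(\pi_1(U,x_0))$ is made, both implications are immediate unpackings of the definitions. The only mildly delicate point worth being explicit about is that when $1 \in B([\zeta],U)$, choosing the trivial loop forces $[\zeta] = 1$, which is what allows one to replace a general basic neighborhood of $1$ by one of the form $B([c_{x_0}],U)$.
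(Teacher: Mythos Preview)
Your proposal is correct and follows essentially the same approach as the paper: both reduce the question to whether $B(1,U)=\{1\}$ for some neighborhood $U$ of $x_0$, using the identification of $B(1,U)$ with $i_{\#}(\pi_1(U,x_0))$. Your write-up is in fact more careful than the paper's, explicitly invoking homogeneity to pass from $\{1\}$ open to discreteness, and justifying (via the constant loop) why a basic neighborhood of $1$ may be taken of the form $B(1,U)$---a step the paper simply asserts.
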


\begin{proof}
Suppose that $\pi_1^{wh}(X,x_0)$ is discrete. Then there exists an open neighborhood $U$ of $x_0$ such that $B(1,U) = \langle 1 \rangle$. It follows that every loop in $U$ based at $x_0$ is null homotopic in $X$. This implies $X$ is semilocally simly connected at $x_0$. Conversely, suppose that $X$ is semilocally simply connected at $x_0$. Then there exists an open neighborhood $U$ of $x_0$ such that every loop in $U$ based at $x_0$ is nullhomotpic in $X$.
\end{proof}

The converse of Corollary 3.10 in general isn't true; a simple counterexample illustrates this:

\begin{exm}
Let $X = S^1 \vee S^1$ be the wedge sum of two circles (with weak topology). Then it's well known that $\pi_1(X) \cong F_2$ where $F_2$ denote the free group on two generators. This is obviously non-abelian, but then by Proposition 3.8 we have that this topology is discrete and so it's a topological group.

\end{exm}

We will now consider some particular examples of spaces and their relation to the whisker topology $\pi_1^{wh}(X,x_0)$. The most important example of these will be the ``earring space'' $\Er^1$. The earring space $\Er^1$ is the prototypical space which one studies in ``wild topology''.

\begin{prp}
$\pi_1^{wh}(\Er^1,b_0)$ where $b_0=(0,0)$ is the canonical wild-point of $\Er^1$ is not a topological group.

\end{prp}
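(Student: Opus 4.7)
The plan is to apply the preceding corollary characterizing when $\pi_1^{wh}(X,x_0)$ is a topological group: I will exhibit a specific loop $\zeta$ at $b_0$ together with an open neighborhood $U$ of $b_0$ for which no neighborhood $V$ of $b_0$ satisfies the conjugation condition. Let $C_n$ denote the $n$-th circle of $\Er^1$ and $\ell_n$ the standard loop traversing $C_n$ once. Take $\zeta = \ell_1$ and $U = \Er^1 \setminus \{x_1\}$, where $x_1 \in C_1$ is the point antipodal to $b_0$; this $U$ is genuinely open in $\Er^1$ and a neighborhood of $b_0$. (The naive alternative $U = \bigcup_{n \geq 2} C_n$ would not work, as that set is closed but not open in $\Er^1$.)

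Given any open neighborhood $V$ of $b_0$, I will use the basic fact that every neighborhood of $b_0$ in $\Er^1$ contains $C_n$ entirely for all sufficiently large $n$, and pick $k$ with $C_k \subseteq V$. The test loop in $V$ will be $\eta = \ell_k$. The key claim is that $\ell_1 \cdot \ell_k \cdot \ell_1^{-}$ is not path-homotopic in $\Er^1$ to any loop contained in $U$; this forces the conjugation criterion to fail for the pair $(\zeta, U)$, yielding the proposition. To establish the claim, I will use the standard continuous retraction $r: \Er^1 \to C_1 \cup C_k$ that is the identity on $C_1 \cup C_k$ and collapses every $C_n$ with $n \notin \{1,k\}$ to $b_0$. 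Since $C_1 \cup C_k \cong S^1 \vee S^1$ and hence $\pi_1(C_1 \cup C_k, b_0) \cong F_2 = \langle a, b \rangle$ with $a = [\ell_1]$ and $b = [\ell_k]$, the conjugate satisfies $r_{\#}([\ell_1 \cdot \ell_k \cdot \ell_1^{-}]) = aba^{-1}$. On the other hand, $r(U) \subseteq (C_1 \cup C_k) \setminus \{x_1\}$, which deformation retracts onto $C_k$, so $r_{\#}$ carries any class represented by a loop in $U$ into the cyclic subgroup $\langle b \rangle \subseteq F_2$. Since $aba^{-1} \notin \langle b \rangle$ in the free group $F_2$, no loop in $U$ can be homotopic to $\ell_1 \cdot \ell_k \cdot \ell_1^{-}$.

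The routine bookkeeping will be continuity of $r$ (immediate from the fact that opens at $b_0$ contain cofinitely many $C_n$ entirely) and the deformation retraction $(C_1 \cup C_k) \setminus \{x_1\} \simeq C_k$. I expect the main conceptual hurdle to be the choice of $U$: it must be genuinely open, yet structured so that retraction onto $C_1 \cup C_k$ annihilates the $a$-generator coming from $C_1$. Puncturing $\Er^1$ at the single point $x_1$ opposite $b_0$ is the cleanest way to achieve both properties simultaneously, after which the failure of the conjugation criterion — and hence of the topological group property — is forced by a one-line computation in $F_2$.
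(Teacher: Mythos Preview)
Your argument is correct and uses a genuinely different route from the paper. The paper argues by sequences: since $[\ell_n]\to 1$ in $\pi_1^{wh}(\Er^1,b_0)$, continuity of right multiplication by $[\ell_1]$ would force $[\ell_n\cdot\ell_1]\to[\ell_1]$; the paper then invokes shape injectivity of $\Er^1$ to show $[\ell_n\cdot\ell_1]\notin B([\ell_1],U_2)$ for $n\ge 2$, so right translation is discontinuous. You instead apply the conjugation criterion (Corollary~3.10) directly: with $\zeta=\ell_1$ and $U=\Er^1\setminus\{x_1\}$, you retract onto $C_1\cup C_k\cong S^1\vee S^1$ and observe $aba^{-1}\notin\langle b\rangle$ in $F_2$. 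Your approach has the advantage of being entirely self-contained---the retraction plus a one-line free-group fact replaces the appeal to shape injectivity, which is a deeper structural theorem about $\Er^1$. The paper's sequence argument, on the other hand, makes the failure of continuity more visibly dynamical. One trivial bookkeeping point: when you pick $k$ with $C_k\subseteq V$, you should record $k\ge 2$ so that $C_1\cup C_k$ really is a wedge of two distinct circles; this is automatic since every neighborhood of $b_0$ contains all but finitely many $C_n$.
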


\begin{proof}
  Let $\Er^1$ be the usual earring space (see Figure 2). Recall that $\Er^1 = \bigcup_{n \geq 1} C_n$ where $C_n \subseteq \R^2$ is the circle of radius $1/n$ which is centered at $(1/n,0)$ for each $n \in \N$. For each $n \geq 1$, let $\ell_n:I \to \Er^1$ be the canonical loops which traverse $C_n$ once counterclockwise which are based at $b_0$. Next, define $U_n$ to be a neighborhood of $b_0$ where $\bigcup_{k \geq n} C_k \subseteq U$ and where $\bigcup_{k \leq n}  C_k$ is an open arc for each $k < n$.
  We proceed by contradiction. Indeed, suppose that $\pi_1^{wh}(\Er^1,b_0)$ is a right-topological group. Consider the sequence $\{[\ell_n \cdot \ell_1] \}_{n \in \N}$; this sequence fails to converge to $[\ell_1]$  as for any basic neighborhood $B([\ell_1],U_n) \; \; (n \geq 2)$ , shape injectivity of the earring space \cite{cannon2} ensures that $[\ell_n \cdot \ell_1] \not \in B([\ell_1],U_n)$ (note that $\{[\ell_n]_{n \in \N} \to 1$). This is a contradiction to our assumption and hence $\pi_1^{wh}(\Er^1,b_0)$ fails to be a topological group.
  
\end{proof}

\begin{rem}
It's important to note that $\pi_1^{wh}(\Er^1,b_0)$ is a topological group whenever $x \neq b_0$. This is a consequence of Proposition 3.12. 

\end{rem}

\begin{figure}[h!]
  \centering
  \includegraphics[scale=1.2]{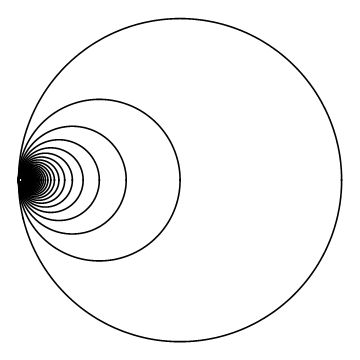}
  \caption{The Earring space $\Er^1$.}

\end{figure}

\begin{cor}
If $X$ is any space where $\Er^1$ is a retract of $X$, then there exists a point  $x_0 \in X$ such that $\pi_1^{wh}(X,x_0)$ is not a topological group. 

\end{cor}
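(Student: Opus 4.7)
The plan is to take $x_0 = b_0$, viewing $b_0 \in \Er^1 \subseteq X$, and argue by contradiction: if $\pi_1^{wh}(X, x_0)$ were a topological group, then the retraction would pull this structure back to $\pi_1^{wh}(\Er^1, b_0)$, contradicting Proposition 3.12. The idea is the standard ``retracts inherit good properties'' maneuver, now run through the machinery of Proposition 3.5 and Theorem 3.7.

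First I would set up the maps. Let $r : (X, x_0) \to (\Er^1, b_0)$ be the retraction and $i : (\Er^1, b_0) \hookrightarrow (X, x_0)$ the inclusion, so $r \circ i = \mathrm{id}_{\Er^1}$. By Proposition 3.5, the induced maps $i_\#$ and $r_\#$ are continuous group homomorphisms between the whisker-topologized fundamental groups, and functoriality gives $r_\# \circ i_\# = \mathrm{id}$ on $\pi_1(\Er^1, b_0)$.

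Next, the key algebraic identity. Assume for contradiction that $\pi_1^{wh}(X, x_0)$ is a topological group. By Theorem 3.7, the inversion map $\inv_X$ on $\pi_1^{wh}(X, x_0)$ is continuous. I claim that inversion on $\pi_1^{wh}(\Er^1, b_0)$ factors as
\[
\inv_{\Er^1} \;=\; r_\# \circ \inv_X \circ i_\#.
\]
Indeed, for any $[\eta] \in \pi_1(\Er^1, b_0)$, since $r_\#$ is a homomorphism,
\[
r_\#\bigl(\inv_X(i_\#([\eta]))\bigr) \;=\; r_\#\bigl(i_\#([\eta])^{-1}\bigr) \;=\; (r_\# \circ i_\#)([\eta])^{-1} \;=\; [\eta]^{-1}.
\]
Since the right-hand side is a composition of continuous maps, $\inv_{\Er^1}$ is continuous, and a second application of Theorem 3.7 then forces $\pi_1^{wh}(\Er^1, b_0)$ to be a topological group, contradicting Proposition 3.12.

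I do not expect a real obstacle here: the argument is essentially formal once Proposition 3.5 supplies continuity of $i_\#$ and $r_\#$ and Theorem 3.7 reduces ``topological group'' to continuity of inversion. The only point requiring attention is selecting the basepoint $x_0 = b_0$ so that $i$ and $r$ are genuinely based maps between the pairs $(\Er^1, b_0)$ and $(X, x_0)$; having made that choice, the retraction identity $r \circ i = \mathrm{id}$ transports cleanly to the fundamental group level and the rest is bookkeeping.
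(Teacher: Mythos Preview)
Your argument is correct, and the paper gives no explicit proof of this corollary---it is stated immediately after the proposition that $\pi_1^{wh}(\Er^1,b_0)$ is not a topological group and left as an evident consequence. Your factorization $\inv_{\Er^1} = r_\# \circ \inv_X \circ i_\#$, together with the theorem characterizing ``topological group'' via continuity of inversion, is precisely the natural argument the paper implicitly intends; the only cosmetic issue is that your internal reference numbers are slightly off relative to the paper's numbering (functoriality of $f_\#$ is the proposition just before the ``functor'' corollary, and the equivalence theorem and earring proposition sit a couple of items later than you cite), but the content is unambiguous.
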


\begin{exm}
Let $\Ha$ be the ``Harmonic Archipelago'' as described in \cite{bogley} (see Figure 4). The space $\Ha$ is a prototypical example of a space which is not homotopically Hausdorff. In \cite{babaee} it's shown that $\pi_1^{wh}(\Ha,x_0)$ is indiscrete (and hence a topological group).
  
\end{exm}

\begin{qst}
Does there exist a space $X$ such that $\pi_1^{wh}(X,x_0)$ is a non-abelian, non-discrete, Hausdorff, topological group?

\end{qst}

J. Brazas asked the previous question on his blog ``Wild Topology'' for which we now given an affirmative answer:


\begin{prp}
Let $X = \prod_{i \in \N} (S^1 \vee S^1)$. Let W = $\pi_1^{wh}(X,x_0)$ where $x_0 = (w_0,w_0,w_0, \dots)$ and $w_0$ is wedge point of $S^1 \vee S^1$. Then $W$ is a non-abelian, Hausdorff, non-discrete topological group.

\end{prp}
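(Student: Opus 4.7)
The plan is to apply Proposition 3.7 to reduce the statement to a verification about a countable product of discrete non-abelian groups. Since $X = \prod_{i \in \N}(S^1 \vee S^1)$, Proposition 3.7 provides a canonical map $\Phi$ which is simultaneously a group isomorphism and a homeomorphism, giving
$$W \;\cong\; \prod_{i \in \N} \pi_1^{wh}(S^1 \vee S^1, w_0)$$
as topological groups. This reduces the problem to understanding a single factor together with the behavior of products.

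Next I would identify each factor. Because $S^1 \vee S^1$ is a CW complex whose wedge point has contractible (wedge-of-open-arcs) neighborhoods, it is semilocally simply connected at $w_0$. Hence by Proposition 3.8 each factor $\pi_1^{wh}(S^1 \vee S^1, w_0)$ is discrete, and as a group it is the free group $F_2$ on two generators. In particular, each factor is a Hausdorff, non-abelian, discrete topological group.

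From these two reductions, I would verify each of the four required properties. That $W$ is a topological group is immediate because a product of topological groups in the product topology is a topological group. Non-abelianness of $W$ is inherited from any single factor, since $F_2$ is non-abelian. Hausdorffness follows because a product of Hausdorff spaces is Hausdorff. Finally, for non-discreteness, any basic open neighborhood of the identity in $\prod_{i \in \N} F_2$ has the form $\prod_{i \in F}\{1\} \times \prod_{i \in \N \setminus F} F_2$ for some finite $F \subset \N$; since $\N \setminus F$ is infinite and $F_2 \neq \{1\}$, this neighborhood necessarily contains non-identity elements, so $\{1\}$ is not open.

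There is no substantial obstacle; essentially all the content is packaged into Propositions 3.7 and 3.8. The only subtle point is to ensure that the isomorphism $\Phi$ from Proposition 3.7 respects both the group and topological structure simultaneously, so that the four properties verified on the product side transfer back to $W$ — but this is exactly the content of Proposition 3.7.
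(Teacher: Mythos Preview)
Your proof is correct and follows essentially the same route as the paper: apply the product proposition to identify $W$ with $\prod_{i\in\N}\pi_1^{wh}(S^1\vee S^1,w_0)\cong (F_2)^\omega$, then read off Hausdorff, non-abelian, and topological-group from the factors and the product. The only minor difference is that the paper establishes non-discreteness by observing that $X$ is not semilocally simply connected at $x_0$ and invoking the discrete-iff-SLSC characterization, whereas you argue it directly on the product side by noting that every basic neighborhood of the identity in $(F_2)^\omega$ contains nontrivial elements; both arguments are immediate and equivalent in spirit.
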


\begin{proof}
   By Proposition 3.7 we have that:

  $$\pi_1^{wh} \bigg(\prod_{i \in \N} (S^1 \vee S^1,x_0) \bigg) \cong \prod_{i \in \N} \pi_1^{wh}(S^1 \vee S^1, x_0) \cong (F_2)^{\omega}.$$

  This implies that $W$ is both Hausdorff and non-abelian. It's not too difficult to see that $W$ is not semilocally simply connected at any of its points and hence by Proposition 3.11 we have that $W$ is non-discrete. Finally, since products of topological groups are topological groups \cite{archangel}; we conclude that $W$ is indeed a topological group.
  
\end{proof}

\begin{rem}
  In fact, we can go one step further from Proposition 3.18 above to construct a general class of examples. Let $X = \prod_{i \in \N} \bigg( \bigvee_{i \in I} S^1 \bigg)$ (where the cardinality of $I$ is at most countable) be the usual bouquet of circles. Then we have:

  $$W_{I} = \pi_1^{wh}(X) \cong \prod_{i \in \N} \pi_1^{wh} \bigg( \bigvee_{i \in I} S^1 \bigg).$$

  Then $W_I$ is by similar argument is a Hausdorff, non-abelian, non-discrete topological group.
 
\end{rem}

\begin{figure}[h!]
  \centering
  \includegraphics[scale=0.5]{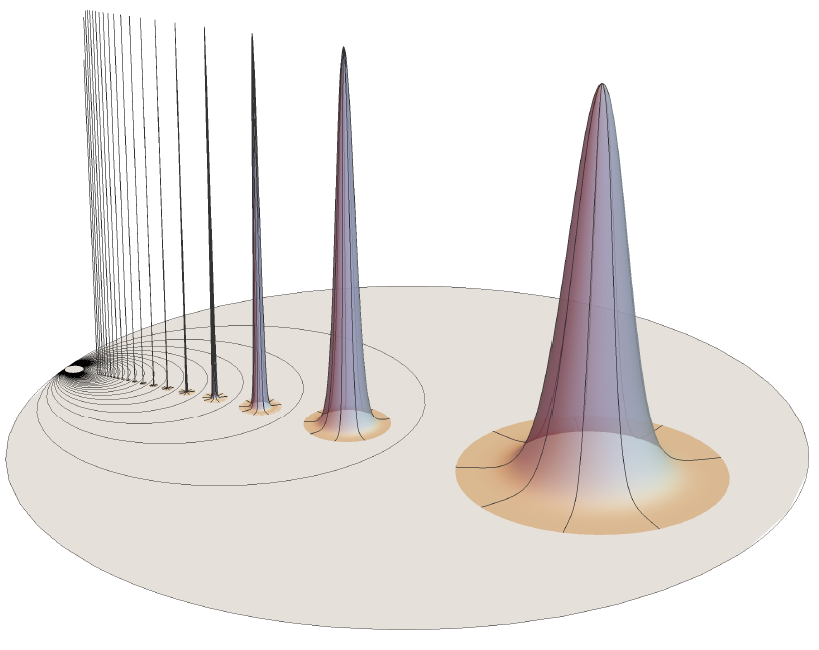}
  \caption{The Harmonic Archipelago $\Ha$.}

\end{figure}

\begin{centering}
  \section{\centering Connectedness \& Separation Axioms of $\pi_1^{wh}(X,x_0)$}
  
\end{centering}

First, we recall an important property of the whisker topology on $\widetilde{X}$.\\

\begin{lem}
Let $U$ be a fixed neighborhood of a particular point $x_0 \in X$. If $\widetilde{X}$ is the set of path homotopy classes starting at $x_0$ then if $[\eta] \in N([\zeta],U)$, then $N([\eta],U) = N([\zeta],U)$.

\end{lem}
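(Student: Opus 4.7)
The plan is to prove the two set-inclusions $N([\eta],U) \subseteq N([\zeta],U)$ and $N([\zeta],U) \subseteq N([\eta],U)$ by direct manipulation of concatenated representatives, using the fact that the union of images of paths contained in $U$ is still contained in $U$. First I would unpack the hypothesis: since $[\eta] \in N([\zeta],U)$, by the very definition of the basic open set there is a path $\kappa$ with $\kappa(0) = \zeta(1)$ and $\img(\kappa) \subseteq U$ such that $[\eta] = [\zeta \cdot \kappa]$. In particular $\eta(1) = \kappa(1) \in U$, so $U$ is a neighborhood of $\eta(1)$ and the set $N([\eta],U)$ is a legitimate basic open neighborhood of $[\eta]$.

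For the inclusion $N([\eta],U) \subseteq N([\zeta],U)$, I would pick $[\mu] \in N([\eta],U)$ and write $[\mu] = [\eta \cdot \sigma]$ with $\img(\sigma) \subseteq U$ and $\sigma(0) = \eta(1)$. Substituting $[\eta] = [\zeta \cdot \kappa]$ gives
\[
[\mu] = [\zeta \cdot \kappa \cdot \sigma],
\]
and since $\img(\kappa \cdot \sigma) = \img(\kappa) \cup \img(\sigma) \subseteq U$, this witnesses $[\mu] \in N([\zeta],U)$.

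For the reverse inclusion, I would take $[\mu] \in N([\zeta],U)$ and write $[\mu] = [\zeta \cdot \tau]$ with $\img(\tau) \subseteq U$ and $\tau(0) = \zeta(1)$. From $[\eta] = [\zeta \cdot \kappa]$ I get $[\zeta] = [\eta \cdot \kappa^{-}]$, where $\kappa^{-}$ starts at $\eta(1)$ and ends at $\zeta(1) = \tau(0)$, with $\img(\kappa^{-}) = \img(\kappa) \subseteq U$. Thus
\[
[\mu] = [\eta \cdot \kappa^{-} \cdot \tau],
\]
and since $\img(\kappa^{-} \cdot \tau) \subseteq U$, we conclude $[\mu] \in N([\eta],U)$, completing the proof.

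The only real care needed is bookkeeping of basepoints and endpoints to make sure each concatenation is well-defined; there is no genuine obstacle here, since the lemma is essentially the statement that the collection of basic neighborhoods $\{N([\zeta],U)\}$ behaves like a neighborhood base whose sets are themselves neighborhoods of each of their points, a structural feature inherited from the fact that $U$ is used as a single fixed ``local'' parameter on both sides.
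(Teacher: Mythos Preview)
Your proof is correct and follows essentially the same approach as the paper's: both arguments write $[\eta] = [\zeta \cdot \kappa]$ with $\img(\kappa) \subseteq U$, then use $\kappa$ and $\kappa^{-}$ to shuttle elements between $N([\zeta],U)$ and $N([\eta],U)$ via concatenation. The only cosmetic difference is that you spell out both inclusions explicitly (and verify $U$ is a neighborhood of $\eta(1)$), whereas the paper writes one inclusion and remarks that the other is nearly identical.
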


\begin{proof}
  First observe that if $[\eta] \in N([\zeta],U)$, then $[\eta] = [\zeta \cdot \kappa], \img(\kappa) \subseteq U$. Then we have that: $[\zeta] = [\eta \cdot \kappa^{-}], \img(\kappa^{-}) \subseteq U$. We will show the inclusion $N([\zeta],U) \subseteq N([\eta],U)$ (the other inclusion is near identical). Indeed, suppose $[\mu] \in N([\zeta],U)$ so that $[\mu] = [\zeta \cdot \iota], \img(\iota) \subseteq U$. Then $[\mu] = [\eta \cdot (\kappa^{-} \cdot \iota)], \img(k^{-} \cdot \iota) \subseteq U$. This implies $[\mu] \in N([\eta],U)$ as desired. 
\end{proof}

\begin{cor}
Let $U$ be any neighborhood of $x_0$ in some space $X$. Then $B([\zeta],U)$ and $B([\eta],U)$ are either equal or disjoint.
\end{cor}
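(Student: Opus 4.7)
The plan is to derive this directly from Lemma 3.20, using the fact that $B([\zeta],U) = N([\zeta],U) \cap \pi_1(X,x_0)$ and that $\pi_1^{wh}(X,x_0)$ has the subspace topology inherited from $\widetilde{X}^{wh}$. The argument reduces to a standard equivalence-class argument: Lemma 3.20 tells us that the relation ``$[\eta] \sim_U [\zeta]$ iff $[\eta] \in N([\zeta],U)$'' yields the same neighborhood $N$, which forces the neighborhoods to partition $\widetilde{X}$ for each fixed $U$.

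First, I would argue the contrapositive. Suppose $B([\zeta],U) \cap B([\eta],U) \neq \emptyset$, and pick some homotopy class of loops $[\mu]$ in the intersection. Then in particular $[\mu] \in N([\zeta],U)$ and $[\mu] \in N([\eta],U)$ when viewed as elements of $\widetilde{X}$ (which is legitimate since $\mu$, being a loop at $x_0$, is in particular a path starting at $x_0$).

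Next, applying Lemma 3.20 to each of these memberships yields $N([\mu],U) = N([\zeta],U)$ and $N([\mu],U) = N([\eta],U)$, so by transitivity $N([\zeta],U) = N([\eta],U)$. Intersecting both sides with $\pi_1(X,x_0)$ gives $B([\zeta],U) = B([\eta],U)$, as required.

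There is no real obstacle here; the only subtle point is to notice that although $B$ is defined only for loop classes $[\zeta] \in \pi_1(X,x_0)$, the auxiliary class $[\mu]$ is itself a loop class, so we may legitimately invoke Lemma 3.20 with $[\mu]$ in either role and then restrict back down to $\pi_1(X,x_0)$ at the end. In short, the corollary follows by intersecting the conclusion of Lemma 3.20 with $\pi_1(X,x_0)$ and observing that two basic open sets of $\pi_1^{wh}(X,x_0)$ sharing a point must already agree as sets in $\widetilde{X}^{wh}$.
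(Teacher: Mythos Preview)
Your proof is correct and follows essentially the same approach as the paper: both argue by assuming the two basic sets meet and then invoke the preceding lemma (Lemma 4.1 in the paper's numbering, which you call Lemma 3.20) to force equality. The only cosmetic difference is that the paper unwinds the algebra to show $[\zeta] \in B([\eta],U)$ directly and then appeals to the lemma once, whereas you keep an intermediate point $[\mu]$, apply the lemma twice at the level of the $N$-sets in $\widetilde{X}$, and intersect down to $\pi_1(X,x_0)$ at the end.
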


\begin{proof}
Suppose that $B([\zeta],U)$ and $B([\eta],U)$ are not disjoint for loops $\zeta, \eta$ at $x_0$. Then $[\zeta \cdot \kappa] = [\eta \cdot \iota], \img(\kappa) \subseteq U$ and $\img(\iota) \subseteq U$. Then $[\zeta] = [\eta \cdot (\iota \cdot \kappa^{-})], \img(\kappa^{-}) \subseteq U$. Then $[\zeta] \in B([\eta],U)$ which completes the proof.
\end{proof}

Corollary 4.2 implies that all basic open sets of $\pi_1^{wh}(X,x_0)$ are clopen, including subgroups [18, Corollary 3.2]. That is we have the following:

\begin{thm}
If $\pi_1^{wh}(X,x_0)$ is Hausdorff, then $\pi_1^{wh}(X,x_0)$ is zero-dimensional (small inductive dimension).

\end{thm}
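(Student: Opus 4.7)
The plan is to exhibit a basis of clopen sets at every point of $\pi_1^{wh}(X,x_0)$ and then appeal to the definition of small inductive dimension; the Hausdorff hypothesis enters only to supply the $T_1$ axiom that the usual definition of $\mathrm{ind}=0$ presupposes.

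First I would fix a loop $\zeta$ based at $x_0$ and an open neighborhood $U$ of $x_0$, and show that the basic open set $B([\zeta],U)$ is closed. By Corollary 4.2, for this fixed $U$ any two basic sets $B([\eta_1],U)$ and $B([\eta_2],U)$ are either equal or disjoint, so the family $\{B([\eta],U) : [\eta] \in \pi_1(X,x_0)\}$ partitions $\pi_1(X,x_0)$. Choosing a set $S \subseteq \pi_1(X,x_0)$ of representatives for this partition with $[\zeta] \in S$, one obtains
$$\pi_1^{wh}(X,x_0) \setminus B([\zeta],U) \;=\; \bigcup_{[\eta] \in S \setminus \{[\zeta]\}} B([\eta],U),$$
a union of open sets and hence open. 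Therefore $B([\zeta],U)$ is clopen.

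Next, since the sets $B([\zeta],U)$ (as $U$ ranges over open neighborhoods of $x_0$) form a neighborhood basis at $[\zeta]$ directly from the definition of the whisker topology, every point of $\pi_1^{wh}(X,x_0)$ admits a neighborhood basis of clopen sets. Under the standing hypothesis that $\pi_1^{wh}(X,x_0)$ is Hausdorff, and hence $T_1$, this is precisely the statement $\mathrm{ind}(\pi_1^{wh}(X,x_0)) = 0$.

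I anticipate no substantive obstacle: the real combinatorial content has already been extracted in Corollary 4.2, and what remains is bookkeeping. The only subtlety worth underscoring is the role of the Hausdorff assumption, which is not needed for the clopen-basis argument itself but is required to pass from a clopen neighborhood basis to $\mathrm{ind}=0$ under the customary convention that zero-dimensional spaces are $T_1$. One could remark in passing that by the homogeneity of $\pi_1^{wh}(X,x_0)$ (Corollary 3.5) it would have sufficed to check the clopen-basis condition at the identity, though in the argument above the check is uniform across points anyway.
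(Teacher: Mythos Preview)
Your argument is correct and follows essentially the same route as the paper: use Corollary 4.2 to see that the basic neighborhoods $B([\zeta],U)$ form a clopen basis, and then invoke the definition of small inductive dimension zero (with the Hausdorff hypothesis supplying $T_1$). If anything, your write-up is more complete than the paper's, which records the clopenness of the basic sets $B(1,U)$ and leaves the passage to $\mathrm{ind}=0$ implicit; your partition argument and explicit mention of where Hausdorffness is used fill in exactly those details.
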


\begin{proof}
Any basic neighborhood of the identity $B(1,U)$ where $U$ is an open set of $X$ are clopen. In turn, this implies that if $\mathfrak{N}(x_0)$ is the set of all open neighborhoods of $x_0$ then we have that $\bigcap_{U \in \mathfrak{N}(x_0)} B(1,U)$ is the closure of $\langle 1 \rangle$.
\end{proof}

\begin{cor}
If $\pi_1^{wh}(X,x_0)$ is Hausdorff, then it is totally separated.

\end{cor}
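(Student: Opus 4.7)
The plan is to combine the clopen basis from Corollary 4.2 with the Hausdorff hypothesis to produce a clopen partition separating any two distinct homotopy classes. Recall that a space is totally separated precisely when any two distinct points lie in disjoint open sets whose union is the whole space, equivalently when they can be separated by a clopen subset.

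First I would fix two distinct elements $[\alpha], [\beta] \in \pi_1^{wh}(X,x_0)$. Using the Hausdorff hypothesis, I would choose disjoint open neighborhoods $V_\alpha \ni [\alpha]$ and $V_\beta \ni [\beta]$. By the definition of the whisker topology, I may shrink $V_\alpha$ to a basic open neighborhood $B([\alpha],U) \subseteq V_\alpha$ for some open neighborhood $U$ of $x_0$. Since $[\beta] \notin V_\alpha$, certainly $[\beta] \notin B([\alpha],U)$. By Corollary 4.2 (or, equivalently, the zero-dimensionality established in Theorem 4.3), $B([\alpha],U)$ is clopen, so its complement $\pi_1^{wh}(X,x_0) \setminus B([\alpha],U)$ is also open. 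Setting these two sets as the desired partition shows that $[\alpha]$ and $[\beta]$ are separated in the sense required by total separatedness.

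The main obstacle, or really the only nontrivial input, is ensuring the basic neighborhoods themselves are clopen; this has already been handled by Corollary 4.2, so the argument reduces to a routine unpacking of definitions. The only mild subtlety is to remember that total separatedness is strictly stronger than totally disconnectedness in general, but for Hausdorff zero-dimensional spaces (which we have by Theorem 4.3) the two coincide, and the clopen separation above witnesses this directly for every pair of distinct points.
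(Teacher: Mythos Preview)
Your proposal is correct and matches the paper's intended argument: the paper states this corollary without proof, as it follows immediately from the clopen basis established in Corollary~4.2/Theorem~4.3, and your argument is precisely the standard verification that a Hausdorff space with a clopen basis is totally separated. The only minor redundancy is that you do not actually need the second neighborhood $V_\beta$; it suffices that $[\beta]\notin B([\alpha],U)$, which already follows once $B([\alpha],U)\subseteq V_\alpha$ and $V_\alpha\cap V_\beta=\varnothing$.
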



\begin{thm}
  Let $\pi_1^{wh}(X,x_0)$ be the whisker topology. Then the following are equivalent:

  \begin{enumerate}
  \item The trivial subgroup is closed in $\pi_1^{wh}(X,x_0)$.
  \item $\pi_1^{wh}(X,x_0)$ is Hausdorff.
  \item $\pi_1^{wh}(X,x_0)$ is $T_3$.

  \item $\pi_1^{wh}(X,x_0)$ is Hausdorff if and only if $X$ is homotopically Hausdorff at $x_0$.

  \end{enumerate}

\end{thm}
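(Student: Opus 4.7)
My plan is to prove the equivalences by going around the cycle $(4) \Rightarrow (2) \Rightarrow (1) \Rightarrow (4)$ and then separately establishing $(2) \Leftrightarrow (3)$. The two unifying tools will be the homogeneity of $\pi_1^{wh}(X,x_0)$ from Corollary 3.5, which lets me reduce every separation question to separating the identity from an arbitrary $[\eta] \neq 1$, and Corollary 4.2, which tells me that two basic neighborhoods relative to the same $U$ are either equal or disjoint, so in particular every basic set $B([\zeta],U)$ is clopen.

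The identification that drives the nontrivial content is $B([1],U) = i_{\#}(\pi_1(U,x_0))$, which is immediate from the definition: the elements of $B([1],U)$ are exactly the classes of loops whose image lies in $U$. Under this identification, the assertion that $X$ is homotopically Hausdorff at $x_0$ is literally ``for every $[\eta] \neq 1$ there exists a neighborhood $U$ of $x_0$ with $[\eta] \notin B([1],U)$''. Given this, $(2)\Rightarrow(1)$ is the usual fact that points are closed in a Hausdorff space. For $(1)\Rightarrow(4)$, if $\{1\}$ is closed then for each $[\eta]\neq 1$ some basic $B([\eta],U)$ misses $1$; but $1 \in B([\eta],U)$ would mean $[\eta^{-}]$ is represented by a loop in $U$, and since $i_{\#}(\pi_1(U,x_0))$ is a subgroup (closed under inversion) this yields $[\eta] \notin i_{\#}(\pi_1(U,x_0))$. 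For $(4)\Rightarrow(2)$, I use homogeneity to reduce to separating $1$ from $[\eta]\neq 1$: the homotopically Hausdorff hypothesis provides $U$ with $[\eta] \notin B([1],U)$, and then Corollary 4.2 forces the clopen sets $B([\eta],U)$ and $B([1],U)$ to be disjoint, giving the required separating neighborhoods.

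For $(2)\Leftrightarrow(3)$, the reverse direction $(3)\Rightarrow(2)$ is the standard implication $T_3 \Rightarrow T_2$. The forward direction follows immediately from Theorem 4.3: Hausdorffness already yields zero-dimensionality, so each point admits a neighborhood basis of clopen sets $B([\eta],U)$, and a Hausdorff zero-dimensional space is automatically $T_3$ (any open set containing $[\eta]$ contains a clopen basic neighborhood, which is its own closed refinement). The only genuinely delicate step is the identification $B([1],U) = i_{\#}(\pi_1(U,x_0))$ together with the use of Corollary 4.2 to upgrade a one-sided separation (a neighborhood of $[\eta]$ avoiding $1$) into two disjoint open neighborhoods; this is what allows the equivalence to go through without assuming $\pi_1^{wh}(X,x_0)$ carries the structure of a topological group, and it is the main subtlety I anticipate in writing the proof cleanly.
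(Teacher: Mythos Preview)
Your proposal is correct and follows essentially the same approach as the paper: both arguments hinge on Corollary~4.2 (basic sets over the same $U$ are equal or disjoint, hence clopen), the identification $B([1],U)=i_{\#}(\pi_1(U,x_0))$, and the zero-dimensionality result (Theorem~4.3) to pass from Hausdorff to $T_3$. The only organizational difference is that the paper proves $1\Rightarrow 2$ directly (using $[\eta^{-}\cdot\zeta]\neq 1$ in place of your explicit appeal to homogeneity) and then establishes $1\Leftrightarrow 4$ separately, whereas you close the cycle $4\Rightarrow 2\Rightarrow 1\Rightarrow 4$; the underlying computations are the same.
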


\begin{proof}
  The implication $3 \Rightarrow 2 \Rightarrow 1$ is clear. In addition, it's well known that every zero dimensional Hausdorff space is $T_3$ \cite{archangel}. It will suffice to show $1 \Rightarrow 2$, $2 \Rightarrow 4$ and $4 \Rightarrow 2$.For $1 \Rightarrow 2$: Suppose that the trivial subgroup $\langle 1 \rangle$ is closed and $[\zeta] \neq [\eta]$ in $\pi_1^{wh}(X,x_0)$. Then observe that $[\eta^{-} \cdot \zeta] \neq 1$ and so there exists an open neighborhood $U$ of $x_0$ such that $1 \not \in B([\eta^{-} \cdot \zeta],U)$. To obtain a contradiction, suppose that $B([\zeta],U)$ and $B([\eta],U)$ are disjoint. Then Lemma 4.1 implies that $[\zeta] \in B([\eta],U)$. Now $[\zeta] = [\eta \cdot \iota], \img(\iota) \subseteq U$. Now, $[\eta^{-} \cdot \zeta] = [\iota]$, this implies that $[\eta^{-} \cdot \zeta] \in B(1,U)$. Then $1 \in B([\eta^{-} \cdot \zeta],U)$; a contradiction.

 For $2 \Rightarrow 4$: Suppose $\langle 1 \rangle$ is not closed. Then there exists $[\zeta] \not = 1$ in the closure of $\langle 1 \rangle$. Suppose $U$ is an open neighborhood of $x_0$ so that $B([\zeta],U)$ is an open neighborhood of $[\zeta]$ with $1 \in B([\zeta],U)$. Then we have that $1 = [\zeta][\kappa]$ for some loop $\kappa$ in $U$. Then it follows that $[\zeta] = [\kappa^{-}]$ and thus $X$ is not homotopically-Hausdorff at $x_0$.

  Conversely, suppose $X$ is homotopically Hausdorff at $x_0$. Then there exists a non-null homotopic loop $\zeta$ based at $x_0$, which is path homotopic to a loop in every neighborhood of $x_0$. We will show that $[\zeta]$ lies in the closure of $\langle 1 \rangle$. Indeed, let $U$ be any neighborhood of $x_0$. Then we have $[\zeta] = [\kappa]$ for some loop $\kappa$ in $U$. This implies that $1 = [\zeta \cdot \kappa^{-}]$ since $\kappa^{-}$ has image in $U$, this implies that $1 \in B([\zeta],U)$. Then $1$ lies in any neighborhood of $[\zeta]$ and subsequently, $[\zeta]$ is in the closure of $\langle 1 \rangle$.

\end{proof}

\section{\centering Metrizability, Separability and Compact Subsets of $\pi_1^{wh}(X,x_0)$}

\subsection{Metrizability and Separability of $\pi_1^{wh}(X,x_0)$}

In this section, we will assume that $X$ is a metrizable space and we will let $d$ denote a metric which induces the topology on $X$.

Below, we give a detailed proof that when $X$ is metrizable, $\pi_1^{wh}(X,x_0)$ is pseudometrizable. If $X$ is also homotopically Hausdorff, then $\widetilde{X}$ is Hausdorff and thus mertizable. However, the separability of $\widetilde{X}$ is not entirely clear. There exist Peano continua $X$ for which $\pi_1^{wh}(X,x_0)$ is uncountable and separable. For example if $X = (S^1)^{\omega}$ is the infinite dimensional torus, then $\pi_1^{wh}(X,x_0)$ is isomorphic to the Baer-Specker group. (endowed with the product topology); however, it contains a dense countable subset; namely, $\bigoplus_{\omega} \Z$. At the other extreme, if $X$ is the Griffiths space or harmonic archipelago, then $\pi_1^{wh}(X,x_0)$ is uncountable and indiscrete and hence trivially separable. In considering the infinite earring space $\Er^1$, the separability of $\pi_1^{wh}(\Er^1,b_0)$ and $\widetilde{\Er}^{wh}$ is less clear. While, it's known that $\widetilde{\Er}^1$ has the structure of a topological $R$-tree (metrizable, uniquely arcwise-connected, and locally arcwise connected). The example of the infinite torus makes it clear that uncountability on the fundamental group is not sufficient for non-separability. 

\begin{defi}
  Let $\rho$ be a distance function on $\pi_1^{wh}(X,x_0)$ as follows for $[\zeta],[\eta] \in \pi_1^{wh}(X,x_0)$, set:

  $$\rho([\zeta],[\eta]) = \inf \{\diam(\xi) \mid [\xi] = [\zeta^{-} \cdot \eta] \}. $$

  Here $\diam(\xi)$ denotes the usual diameter of a loop. It is clear that $\rho([\zeta],[\eta]) \geq 0$ and $\rho([\zeta],[\eta]) = \rho([\eta],[\zeta])$. We verify now verify the triangle inequality. 
\end{defi}

\begin{lem}
If $\zeta,\eta$ are loops in $X$ based at $x_0$ and $\xi = \zeta \cdot \eta$, then $\diam(\xi) \leq \diam(\zeta) + \diam(\eta)$.

\end{lem}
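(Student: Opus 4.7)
The plan is to reduce everything to the triangle inequality for the metric $d$ on $X$, exploiting the fact that $x_0$ lies in the image of both loops $\zeta$ and $\eta$, so the concatenation $\xi$ has an image that is the union of two sets sharing the common point $x_0$.

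First I would observe that $\img(\xi) = \img(\zeta) \cup \img(\eta)$, which is immediate from the definition of $\zeta \cdot \eta$ as traversing $\zeta$ on $[0,1/2]$ and $\eta$ on $[1/2,1]$. Next, to bound $\diam(\xi) = \sup\{d(p,q) \mid p,q \in \img(\xi)\}$, I would fix an arbitrary pair $p, q \in \img(\xi)$ and split into three cases based on which of $\img(\zeta)$ and $\img(\eta)$ contains each point.

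In the case where both $p$ and $q$ lie in $\img(\zeta)$, the bound $d(p,q) \leq \diam(\zeta) \leq \diam(\zeta) + \diam(\eta)$ is immediate, and symmetrically if both lie in $\img(\eta)$. The mixed case, say $p \in \img(\zeta)$ and $q \in \img(\eta)$, is where I would use that both $\zeta$ and $\eta$ are \emph{loops} based at $x_0$, so $x_0 \in \img(\zeta) \cap \img(\eta)$; the triangle inequality then gives
$$d(p,q) \leq d(p, x_0) + d(x_0, q) \leq \diam(\zeta) + \diam(\eta).$$
Taking the supremum over all such pairs $(p,q)$ yields $\diam(\xi) \leq \diam(\zeta) + \diam(\eta)$.

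There is no real obstacle; the only subtlety worth flagging in the write-up is that the argument relies crucially on $\zeta$ and $\eta$ being loops based at the \emph{same} point $x_0$, because that is what supplies the common element $x_0$ bridging the two images in the mixed case. Without this common basepoint, the conclusion would fail.
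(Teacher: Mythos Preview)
Your proof is correct and follows essentially the same approach as the paper: both split into three cases according to which of $\img(\zeta)$, $\img(\eta)$ contain the two points and invoke the triangle inequality through the common basepoint $x_0$ in the mixed case. The only cosmetic difference is that the paper phrases the argument as a proof by contradiction while you give the cleaner direct version.
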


\begin{proof}
  If $\diam(\xi) > \diam(\zeta) + \diam(\eta)$, then there exists $x,y \in [0,1]$ where $d(\xi(x),\xi(y)) > \diam(\zeta) + \diam(\eta)$. If $x,y \in [0,1/2]$, then $d(\xi(x),\xi(y)) > \diam(\zeta(x/2),\zeta(y/2)) > \diam(\zeta)$, a contradiction. On the other hand, if $x,y \in [1/2,1]$, then $d(\xi(x),\xi(y)) = d(\eta(2x-1),\zeta(2y-1)) > \diam(\eta)$. Without loss of generality suppose $x \in [0,1/2]$ and $y \in [1/2,1]$. Then we have that:

  $$d(\zeta(2x),\eta(2y-1)) = d(\xi(x),\xi(y)) > \diam(\zeta) + \diam(\eta) \geq d(\zeta(2x),x_0) + d(x_0,\eta(2y-1)).$$.

  This contradicts the triangle inequality.

\end{proof}

\begin{lem}
  Given $[\zeta],[\eta],[\xi] \in \pi_1^{wh}(X,x_0)$, $\rho([\zeta],[\iota]) \leq \rho([\zeta],[\eta]) + \rho([\eta],[\iota])$.
\end{lem}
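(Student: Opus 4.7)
The plan is to reduce the triangle inequality to a near-optimal choice of representatives combined with Lemma 5.2. Concretely, fix $\varepsilon > 0$. By the definition of $\rho$ as an infimum, I would first select a loop $\alpha$ at $x_0$ with $[\alpha] = [\zeta^{-} \cdot \eta]$ and $\diam(\alpha) < \rho([\zeta],[\eta]) + \varepsilon/2$, and then a loop $\beta$ at $x_0$ with $[\beta] = [\eta^{-} \cdot \iota]$ and $\diam(\beta) < \rho([\eta],[\iota]) + \varepsilon/2$.

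The key step is forming the concatenation $\xi := \alpha \cdot \beta$, which is again a loop based at $x_0$ and satisfies
$$[\xi] \; = \; [\alpha][\beta] \; = \; [\zeta^{-} \cdot \eta][\eta^{-} \cdot \iota] \; = \; [\zeta^{-} \cdot \iota],$$
so $\xi$ is a legitimate candidate in the infimum defining $\rho([\zeta],[\iota])$. Applying Lemma 5.2 to $\xi = \alpha \cdot \beta$ bounds its diameter by $\diam(\alpha) + \diam(\beta)$, giving
$$\rho([\zeta],[\iota]) \; \leq \; \diam(\xi) \; \leq \; \diam(\alpha) + \diam(\beta) \; < \; \rho([\zeta],[\eta]) + \rho([\eta],[\iota]) + \varepsilon.$$
Letting $\varepsilon \to 0$ yields the desired inequality.

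I do not expect any serious obstacle here: the only conceptual point to check is that Lemma 5.2 is applicable, which requires that both $\alpha$ and $\beta$ be loops at $x_0$. This is automatic, since the infimum in the definition of $\rho$ ranges over representatives of a path-homotopy class of loops at $x_0$, and any such representative may be taken itself to be a loop at $x_0$. The rest of the argument is a standard $\varepsilon$-approximation trick on an infimum.
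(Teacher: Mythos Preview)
Your proof is correct and follows essentially the same approach as the paper: choose near-optimal representatives for $[\zeta^{-}\cdot\eta]$ and $[\eta^{-}\cdot\iota]$, concatenate them, and apply Lemma~5.2. The only cosmetic difference is that the paper frames the argument by contradiction rather than via a direct $\varepsilon$-approximation, but the content is the same.
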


\begin{proof}
Suppose $\rho([\zeta],[\xi]) > \rho([\zeta],[\eta]) + \rho([\eta],[\xi])$. Then $\rho([\zeta],[\xi]) - \rho([\eta],[\xi]) > \rho([\zeta],[\eta])$, and there exists a loop $\kappa$ with $[\kappa] = [\zeta^{-} \cdot \eta]$ and $\rho([\zeta],[\xi]) - \rho([\eta],[\xi]) > \diam(\kappa)$. Now, since $\rho([\zeta],[\xi]) - \diam(\kappa) > \rho([\eta],[\xi])$, there exists a loop $\theta$ with $[\theta] = [\eta^{-} \cdot \xi]$ and $\rho([\zeta],[\xi]) - \diam(\kappa) > \diam(\theta)$. We have $\diam(\kappa) + \diam(\theta) \geq \diam(\kappa \cdot \theta)$ from Lemma 5.2 and thus $\rho([\zeta],[\xi]) > \diam(\kappa \cdot \theta)$. Since $[\kappa \cdot \theta] = [\zeta^{-} \cdot \xi]$ this yields a contradiction.

\end{proof}

We conclude that $\rho$ is a pseudometric on $\pi_1^{wh}(X,x_0)$.

\begin{lem}
The pseudo-metric $\rho$ induces the topology of $\pi_1^{wh}(X,x_0)$.

\end{lem}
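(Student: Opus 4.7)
The plan is to show that at each class $[\zeta]\in\pi_1^{wh}(X,x_0)$, the whisker basic neighborhoods $B([\zeta],U)$ and the $\rho$-balls $B_\rho([\zeta],\varepsilon)$ form mutually refining neighborhood bases. Since both topologies are translation-invariant (in the whisker case by Proposition 3.4, and for $\rho$ directly from the definition, since $[\xi]=[\zeta^-\cdot\eta]$ iff $[\xi]=[(\alpha\cdot\zeta)^-\cdot(\alpha\cdot\eta)]$), it would even suffice to check this at the identity, but since the argument is equally easy at an arbitrary point I would just do it there.

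For the inclusion $B([\zeta],U)\subseteq B_\rho([\zeta],\varepsilon)$ for suitable $U$, given $\varepsilon>0$ I would choose the metric ball $U=B_d(x_0,\varepsilon/3)$, whose diameter is at most $2\varepsilon/3<\varepsilon$. If $[\eta]=[\zeta\cdot\kappa]\in B([\zeta],U)$ with $\img(\kappa)\subseteq U$, then $[\zeta^-\cdot\eta]=[\kappa]$ with $\diam(\kappa)\leq\diam(U)<\varepsilon$, so $\rho([\zeta],[\eta])<\varepsilon$.

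For the reverse inclusion $B_\rho([\zeta],\varepsilon)\subseteq B([\zeta],U)$, given an open $U\ni x_0$ I would pick $\varepsilon>0$ with $B_d(x_0,\varepsilon)\subseteq U$, which exists since $d$ induces the topology of $X$. If $[\eta]\in B_\rho([\zeta],\varepsilon)$, there is a loop $\kappa$ at $x_0$ with $[\kappa]=[\zeta^-\cdot\eta]$ and $\diam(\kappa)<\varepsilon$. Here is the one small observation worth flagging: because $\kappa$ is \emph{based} at $x_0$, the basepoint lies in $\img(\kappa)$, so $d(x_0,\kappa(t))\leq\diam(\kappa)<\varepsilon$ for all $t$; hence $\img(\kappa)\subseteq B_d(x_0,\varepsilon)\subseteq U$, giving $[\eta]=[\zeta\cdot\kappa]\in B([\zeta],U)$.

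The main conceptual point — and the only non-bookkeeping step — is this last use of the basepoint constraint to turn a diameter bound into a containment in a metric ball around $x_0$; everything else is routine. I do not anticipate any real obstacle, since the triangle-inequality work has already been isolated in Lemma 5.2 and Lemma 5.3, so Lemma 5.4 reduces to the two one-line comparisons above.
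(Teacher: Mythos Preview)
Your proposal is correct and follows essentially the same route as the paper: the paper also checks the two mutual refinements, using the $d$-ball $O_d(x_0,r)\subseteq U$ to get $O_\rho([\zeta],r)\subseteq B([\zeta],U)$ and the ball $O_d(x_0,r/3)$ to get $B([\zeta],O_d(x_0,r/3))\subseteq O_\rho([\zeta],r)$, with the same ``basepoint lies in the image'' observation (stated implicitly) to pass from a diameter bound to containment in a ball about $x_0$. Your write-up is in fact slightly more explicit on that last point than the paper's.
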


\begin{proof}
  Suppose $B([\zeta],U)$ is a basic neighborhood of $[\zeta] \in \pi_1^{wh}(X,x_0)$. Find some $r > 0$ such that $O_d(x_0,r) \subseteq U$ where $O_d(x_0,r)$ is the open $r$-ball about $x_0$. Suppose that $[\eta] \in O_{\rho}([\zeta],r)$. Since $\rho([\zeta],[\eta]) < r$, there exists some loop $\xi$ based at $x_0$ with $[\xi] = [\zeta^{-} \cdot \eta]$ and $\diam(\xi) < r$. Now, since $\xi$ has image in $O_d(x_0,r)$, it has image in $U$. This implies that $[\eta] = [\zeta \cdot \kappa] \in B([\zeta],U)$.

  Conversely, suppose that $r > 0$ and consider the neighborhood $O_{\rho}([\zeta],r)$. It will suffice to show that $B([\zeta],O_d(x_0,r/3)) \subseteq O_{\rho}([\zeta],r)$. Indeed, suppose $[\eta] \in B([\zeta],O_d(x_0,r/3))$. Then write $[\eta] = [\zeta \cdot \kappa]$ for a loop $\kappa \in O_d(x_0,r/3)$. Then it must be that $[\kappa] = [\zeta^{-} \cdot \eta]$ and $\diam(\kappa) \leq \frac{2r}{3}$. Thus, $\rho([\zeta],[\eta]) \leq \frac{2r}{3} < r$, showing that $[\eta] \in O_{\rho}([\zeta],r)$.

\end{proof}

\begin{thm}
If $X$ is metrizable, then $\pi_1^{wh}(X,x_0)$ is pseudometrizable.

\end{thm}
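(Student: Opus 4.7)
The bulk of the work for this theorem has already been carried out in Definition~5.1 and Lemmas~5.2--5.4, so my plan is essentially to assemble those pieces into a single statement. I would begin by letting $d$ be a metric inducing the topology on $X$ (which exists by hypothesis) and taking $\rho$ to be the distance function on $\pi_1^{wh}(X,x_0)$ introduced in Definition~5.1. The non-negativity $\rho([\zeta],[\eta]) \geq 0$ and symmetry $\rho([\zeta],[\eta]) = \rho([\eta],[\zeta])$ were noted immediately after the definition (symmetry uses that $[\xi] = [\zeta^- \cdot \eta]$ iff $[\xi^-] = [\eta^- \cdot \zeta]$ and $\diam(\xi) = \diam(\xi^-)$). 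I would also remark that $\rho([\zeta],[\zeta]) = 0$, since the constant loop at $x_0$ represents $[\zeta^- \cdot \zeta]$ and has diameter $0$.

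Next, I would invoke Lemma~5.3 directly to obtain the triangle inequality, which together with the above properties shows that $\rho$ is a pseudometric on $\pi_1^{wh}(X,x_0)$. Finally, Lemma~5.4 asserts precisely that $\rho$ induces the whisker topology, so the topology on $\pi_1^{wh}(X,x_0)$ is generated by a pseudometric, which is the definition of pseudometrizability.

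There is no serious obstacle here; the entire content of the theorem has been distributed into the preceding lemmas, and the role of this theorem is just to package them. If anything, the one place where care is warranted is the implicit use of $d$ in Definition~5.1 and Lemma~5.4 --- one should note explicitly that the metric $d$ is fixed at the start of the subsection, so $\diam(\xi)$ and the balls $O_d(x_0,r)$ are well-defined, and the resulting pseudometric $\rho$ depends on the choice of $d$ but any such choice suffices to exhibit pseudometrizability.
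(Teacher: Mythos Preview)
Your proposal is correct and matches the paper's own proof, which simply states that the result follows from Lemmas~5.2, 5.3, and 5.4. You have in fact been slightly more careful than the paper by explicitly noting $\rho([\zeta],[\zeta])=0$ and flagging the dependence on the chosen metric $d$.
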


\begin{proof}
This follows from Lemma 5.2, 5.3 and 5.4.

\end{proof}

\begin{thm}
  Let $X$ be a path-connected, metrizable, homotopically Hausdorff space and $x_0 \in X$. Then $\pi_1^{wh}(X,x_0)$ is a homogeneous, zero-dimensional, metrizable space.

\end{thm}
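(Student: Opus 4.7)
The plan is to assemble the three conclusions by citing the results already proved in the excerpt, with the only non-trivial step being the upgrade from pseudometrizable to metrizable.

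First, I would observe that homogeneity is essentially free. By Proposition 3.4 every left translation $\lambda_{[\eta]}$ is a homeomorphism of $\pi_1^{wh}(X,x_0)$ onto itself, and since the group acts transitively on itself, this gives homogeneity; this is exactly the content of Corollary 3.5, so I would just invoke it. Note that no hypothesis on $X$ beyond those required to define the whisker topology is needed here.

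Next I would show that the space is Hausdorff, which is the hinge on which the other two properties turn. Since $X$ is homotopically Hausdorff at $x_0$ by hypothesis, the equivalence $(2)\Leftrightarrow(4)$ of Theorem 4.5 gives that $\pi_1^{wh}(X,x_0)$ is Hausdorff. Once Hausdorffness is in hand, zero-dimensionality follows immediately from Theorem 4.3 (together with Corollary 4.2 on disjointness/equality of basic neighborhoods, which shows each $B([\zeta],U)$ is clopen and therefore provides a clopen neighborhood basis at each point).

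For metrizability, I would appeal to Theorem 5.5, which gives that metrizability of $X$ implies $\pi_1^{wh}(X,x_0)$ is pseudometrizable, via the pseudometric $\rho$ of Definition 5.1, and Lemma 5.4 states that $\rho$ induces the whisker topology. To promote the pseudometric to a genuine metric, I need $\rho([\zeta],[\eta])=0\Rightarrow[\zeta]=[\eta]$, and this is exactly where Hausdorffness enters: a pseudometric inducing a $T_0$ (in particular Hausdorff) topology is automatically a metric, since two points at zero pseudometric distance lie in all the same open balls and hence cannot be topologically separated. Thus $\rho$ is a metric inducing the whisker topology, and $\pi_1^{wh}(X,x_0)$ is metrizable.

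The only place any work beyond citation occurs is this last observation that a Hausdorff topology induced by a pseudometric forces the pseudometric to separate points; I expect this to be the ``main'' (though still routine) step, and I would state it in a single sentence rather than grind through a general-topology lemma. Concluding with ``Therefore $\pi_1^{wh}(X,x_0)$ is homogeneous, zero-dimensional and metrizable'' finishes the proof.
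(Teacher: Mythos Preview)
Your proposal is correct and follows essentially the same approach as the paper, which simply cites the earlier results on Hausdorffness/zero-dimensionality (Theorems 4.3 and 4.5) together with Theorem 5.5 on pseudometrizability. Your write-up is in fact more explicit than the paper's one-line proof, in particular in spelling out the routine observation that a pseudometric inducing a Hausdorff topology must already be a metric; the paper leaves that step implicit.
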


\begin{proof}
This follows from Theorem 4.6 and Theorem 5.5.
\end{proof}

\begin{defi}
  A path $\zeta:I \to X$ is said to be \emph{reduced} if either $\zeta$ is constant or if there is no subpath of $\zeta$ that is a null-homotopic loop.

\end{defi}

\begin{lem}
  \cite{cannon}: If $X$ is a one-dimensional Hausdorff space, then every path $\zeta:I \to X$ is path-homotopic to a reduced path $\eta:I \to X$ by a homotopy with image in $\img(\zeta)$. Moreover, if $\zeta$ and $\eta$ are path-homotopic reduced paths, then $\zeta \equiv \eta$.

\end{lem}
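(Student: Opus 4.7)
The plan is to handle existence and uniqueness separately. For existence of a reduced representative, I would use a Zorn-type argument on the set $\mathcal{R}$ of paths $\eta : I \to X$ that are path-homotopic to $\zeta$ via a homotopy with image in $\img(\zeta)$, equipped with the partial order where $\eta' \preceq \eta$ when $\eta'$ is obtained from $\eta$ by selecting a closed subinterval $[a,b] \subseteq I$ on which $\eta|_{[a,b]}$ is a null-homotopic loop, collapsing this subpath to a constant, and reparametrizing. The first technical step is to show every chain in $(\mathcal{R},\preceq)$ has a lower bound by taking a (possibly transfinite) limit of the reductions; one-dimensionality of $X$ enters crucially here to guarantee this limit is a continuous path still lying in $\img(\zeta)$ and still path-homotopic to $\zeta$. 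A $\preceq$-minimal element $\eta_0$ then contains no null-homotopic subloop, so it is reduced by definition, and any homotopy from $\zeta$ to $\eta_0$ furnished by the chain of reductions stays inside $\img(\zeta)$ by construction.

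For uniqueness, let $\zeta, \eta : I \to X$ be reduced paths with $[\zeta] = [\eta]$, and let $H : I \times I \to X$ be a path-homotopy. I would first show $\img(\zeta) = \img(\eta)$: if some $p \in \img(\zeta) \setminus \img(\eta)$ existed, the one-dimensional structure of $X$ near $p$ combined with the fact that $H$ avoids $p$ on the slice $I \times \{1\}$ would let one extract a null-homotopic subloop of $\zeta$ passing through $p$, contradicting reducedness of $\zeta$. Once the images coincide, I would use reducedness plus one-dimensionality to show that the natural linear order on this common image induced by each parametrization is canonically determined by the homotopy class; this forces $\zeta$ and $\eta$ to differ only by an increasing homeomorphism of $I$, i.e.\ $\zeta \equiv \eta$.

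The main obstacle is the uniqueness part. For a space such as $\Er^1$ that is one-dimensional but not locally simply connected, the homotopy $H : I \times I \to X$ can be highly pathological, and neither covering-space theory nor cellular approximation is available. The essential input is the $\R$-tree structure on $\widetilde{X}^{wh}$ alluded to in Subsection~5.1; without invoking this structure or a suitable replacement, establishing the homotopy-invariance of the induced order on $\img(\zeta)$ requires a substantial amount of combinatorial bookkeeping of the way $H$ threads through tiny loops of $X$. This is precisely where Cannon's original argument concentrates its technical weight, and any independent proof would have to reproduce an analogous analysis.
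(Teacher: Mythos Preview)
The paper does not prove this lemma at all: it is quoted verbatim from Cannon--Conner \cite{cannon} and used as a black box in the proof of Theorem~5.9. There is therefore no ``paper's own proof'' to compare your proposal against.

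As for your sketch on its own merits, the broad shape is right but both halves are thinner than what Cannon--Conner actually do. For existence, the relation you define (a single cancellation step) is not itself a partial order suitable for Zorn; one needs its transitive closure, and then the real content is the limit step---showing that a transfinite sequence of cancellations converges to a continuous path that is still path-homotopic to $\zeta$ inside $\img(\zeta)$. Cannon--Conner handle this not by a bare Zorn argument but by factoring paths through inverse limits of finite trees (dendrites), where reduction is combinatorially transparent. For uniqueness, your proposed step ``extract a null-homotopic subloop of $\zeta$ through $p$'' does not follow from the data you have: knowing only that $H$ misses $p$ on the top edge does not by itself produce a null-homotopic subloop of $\zeta$ in a general one-dimensional space. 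The actual argument again routes through the dendrite structure---the image of $H$ is one-dimensional, hence factors through a tree-like space in which path-homotopy classes have \emph{canonical} reduced representatives, and uniqueness up to reparametrization is then immediate. You correctly flag this as the hard part, but the $\widetilde{X}^{wh}$ $\mathbb{R}$-tree structure you allude to is a \emph{consequence} of this lemma rather than an independent tool one can invoke to prove it.
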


\begin{thm}
  The group $\pi_1^{wh}(\Er^1,b_0)$ is not separable.
  
\end{thm}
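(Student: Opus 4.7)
The plan is to exhibit an uncountable collection of pairwise disjoint, nonempty basic open subsets of $\pi_1^{wh}(\Er^1, b_0)$; since any countable subset meets at most countably many of them, this immediately rules out the existence of a countable dense subset.

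First I would fix a point $p \in C_1 \setminus \{b_0\}$ and take $U = \Er^1 \setminus \{p\}$, an open neighborhood of $b_0$ that deformation retracts onto $\Er^1_{\geq 2} := \bigcup_{k \geq 2} C_k$ (the arc $C_1 \setminus \{p\}$ collapses to $b_0$). Let $H = i_{\#}(\pi_1(U,b_0)) \leq \pi_1(\Er^1,b_0)$; combining Lemma 5.8 with the deformation retraction, $H$ is exactly the set of classes whose unique reduced representative has image in $\Er^1_{\geq 2}$, equivalently whose reduced form has no occurrence of $\ell_1^{\pm 1}$. This subgroup is uncountable: for each $S \subseteq \{2,3,\dots\}$ the null-concatenation $w_S = \prod_{k \in S} \ell_k$ is a well-defined class in $H$, and distinct $S$ produce distinct classes by the uniqueness clause of Lemma 5.8. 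For each $[w] \in H$, set $[\alpha_w] = [\ell_1 \cdot w \cdot \ell_1^{-1}]$; since conjugation by $[\ell_1]$ is a bijection of $\pi_1(\Er^1,b_0)$, the family $\{[\alpha_w]\}_{[w] \in H}$ is uncountable as well.

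The heart of the argument is the claim that $[\ell_1 \cdot v \cdot \ell_1^{-1}] \notin H$ whenever $[v] \in H \setminus \{1\}$. Writing $\tilde{v}$ for the reduced representative of $v$ (a nonconstant reduced path with image in $\Er^1_{\geq 2}$), I would show that the concatenation $\ell_1 \cdot \tilde{v} \cdot \ell_1^{-1}$ is itself reduced. Any nullhomotopic sub-loop would have to straddle one of the basepoint junctions, since the individual pieces $\ell_1,\tilde{v},\ell_1^{-1}$ are reduced; the topological separation $C_1 \cap \Er^1_{\geq 2} = \{b_0\}$ forces any such sub-loop to close up at $b_0$, and a direct $\pi_1$-computation using $[v] \neq 1$ together with $[\ell_1] \notin H$ then shows every candidate sub-loop represents a nontrivial class, ruling out nullhomotopy. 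By uniqueness in Lemma 5.8, $\ell_1 \cdot \tilde{v} \cdot \ell_1^{-1}$ is therefore the reduced form of $[\ell_1 \cdot v \cdot \ell_1^{-1}]$, and it uses $\ell_1$, so the class lies outside $H$. Applying this to $v = w^{-1}w'$ for distinct $[w], [w'] \in H$ gives $[\alpha_w^{-} \cdot \alpha_{w'}] \notin H$, whence $B([\alpha_w], U) \cap B([\alpha_{w'}], U) = \emptyset$ (any common class would express $[\alpha_w^{-} \cdot \alpha_{w'}]$ as a product of two elements of $H$, contradicting the claim). The hardest step is precisely the reduced-form check that $\ell_1 \cdot \tilde{v} \cdot \ell_1^{-1}$ admits no nullhomotopic sub-loop---this is where the one-dimensionality of $\Er^1$ (through Lemma 5.8) and the single excised point $p$ slicing $C_1$ off from $\Er^1_{\geq 2}$ do the essential work. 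Once the claim is in hand, $\{B([\alpha_w], U)\}_{[w]\in H}$ is the desired uncountable pairwise-disjoint family of nonempty open sets, and separability fails.
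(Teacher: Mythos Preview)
Your argument is correct and takes a genuinely different route from the paper. The paper works inside a specific subspace $S$ of classes with reduced form $(\prod_j \ell_{n_j})\cdot \ell_m$, assumes a countable dense subset $A\subseteq S$, and uses a Cantor diagonalization on the index sequences to produce an explicit $g\in S$ whose basic neighborhood $B(g,U)$ (for the same $U$ you chose) misses $A$. You instead show directly that $\pi_1^{wh}(\Er^1,b_0)$ has uncountable cellularity: since $B([\alpha_w],U)$ is exactly the left coset $[\alpha_w]H$, your reduced-path claim amounts to $[\ell_1]H[\ell_1^{-}]\cap H=\{1\}$, which forces the cosets $\{[\alpha_w]H:[w]\in H\}$ to be pairwise distinct. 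This is more structural, avoids diagonalization entirely, and yields the slightly stronger conclusion that the space is not even ccc. Both proofs lean on Lemma~5.8 at the same essential point, namely recognizing reduced representatives and invoking their uniqueness.

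One small imprecision in your sketch: you write that the separation $C_1\cap \Er^1_{\geq 2}=\{b_0\}$ forces any straddling sub-loop of $\ell_1\cdot\tilde v\cdot \ell_1^{-}$ to close up at $b_0$. That is true when the sub-loop crosses exactly one of the two junctions, but if its domain contains all of the middle third then both endpoints lie on $C_1$ and can coincide at an interior point of $C_1$. The endpoint condition then forces the sub-loop to have the form $\gamma\cdot\tilde v\cdot\gamma^{-}$ for an arc $\gamma$ of $\ell_1$, hence it is null-homotopic only if $[v]=1$. Since you already list $[v]\neq 1$ among your ingredients, adding this one extra case completes the verification that $\ell_1\cdot\tilde v\cdot\ell_1^{-}$ is reduced.
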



\begin{proof}
We have that $\pi_1^{wh}(\Er^1,b_0)$ is metrizable by Theorem 5.5 and so it suffices to find a subspace of $\pi_1^{wh}(\Er^1,b_0)$ that fails to be separable. Let $S \subseteq \pi_1^{wh}(\Er^1,b_0)$ be a collection of homotopy classes of reduced loops of the form $(\prod_{j=1}^{\infty} \ell_{n_j}) \cdot \ell_m$ where $1 \leq n_1 < n_2 < n_3 < \cdots $ and $m \in \N$. Now, suppose that $A = \{g_1,g_2,g_3, \dots \}$ is a countably infinite subset of $S$ that's dense in $S$ (with subspace topology which is inherited from $\pi_1^{wh}(\Er^1,b_0)$). Write $g_n = [\zeta_k][\ell_{m_k}]$ where $\zeta_k = \prod_{j = 1}^{\infty} \ell_{n_k,j}$ and $m_k \in \N$. Applying Cantor diagonalization find a sequence of integers $1 \leq p_1 < p_2 < p_3 < \cdots$ such that if $\eta = \prod_{j = 1}^{\infty} \ell_{p_j}$ such that $p_k \not = n_k,k$ for each $k$. Since all of these loops are reduced and homotopy classes have reduced representatives (Lemma 5.8), we have that $[\eta] \not = [\zeta_k]$ for all $k$. Let $g = [\eta][\ell_1]$ and note that $g \in S$. Let $U \subseteq \Er^1$ be an open neighborhood such that $\bigcup_{n \geq 2} C_n \subseteq U$ and $U \cap C_1$ is an open arc. We claim that $B(g,U) \cap S$ does not meet $A$. Suppose otherwise, that is suppose we have that $[\eta \cdot \ell_1 \cdot \kappa] \in A$ for a reduced loop $\kappa:I \to U$. Since $\kappa$ does not contain $\ell_1$ as a subloop and $\eta \cdot \ell_1$ is reduced, $\eta \cdot \ell_1 \cdot \kappa$ is reduced. Since $[\eta \cdot \ell_1 \cdot \kappa] = [\zeta_k \cdot \ell_{n_k}]$ for some $k \in \N$, then Lemma 5.8 implies that  $\eta \cdot \ell_1 \cdot \kappa \equiv \zeta_k \cdot \ell_{n_k}$. However, since $\eta$ and $\zeta_k$  are indexed by well-ordered sets, the only way for this to occur is if $\eta = \zeta_k$. This is a contradiction.
  
\end{proof}

\begin{thm}
  If $X$ is any one-dimensional Peano continua and $x_0 \in X$, then we have the following dichotomy. Either

  \begin{enumerate}

  \item $X$ is semi-locally simply connected and $\widetilde{X}^{wh}$ and $\pi_1^{wh}(X,x_0)$ are separable metric spaces.
  \item or $X$ is not semi-locally simply connected and $\widetilde{X}^{wh}$ and $\pi_1^{wh}(X,x_0)$ are non-separable metric spaces.

 \end{enumerate}

\end{thm}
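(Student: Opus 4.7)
The argument splits cleanly along the dichotomy, and in both cases metrizability is independent of the separability question. Metrizability of $\pi_1^{wh}(X,x_0)$ follows from the pseudometrizability theorem (Theorem 5.5) together with the fact (Cannon--Conner) that one-dimensional Peano continua are homotopically Hausdorff, so the Hausdorff equivalences of the big theorem of Section 4 apply, and Theorem 5.6 upgrades pseudometrizability to metrizability. The remark preceding Definition 5.1 records the analogous statement for $\widetilde{X}^{wh}$. So the substance is separability.

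For part (1), suppose $X$ is SLSC. By the ``discrete iff SLSC'' proposition of Section 3, $\pi_1^{wh}(X,x_0)$ is discrete. I would then invoke the classical fact that a one-dimensional compact locally path-connected SLSC space is homotopy equivalent to a finite graph, so $\pi_1(X,x_0)$ is a finitely generated free group, in particular countable. A countable discrete metric space is separable. For $\widetilde{X}^{wh}$, discreteness of the fibre together with the local contractibility provided by SLSC makes the endpoint projection $p: \widetilde{X}^{wh} \to X$ a classical covering map; a covering of a separable metric space with countable fibre is itself separable metric.

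For part (2), let $y \in X$ be a point at which $X$ fails SLSC. My plan is to exploit an earring retract. By standard results in the theory of one-dimensional Peano continua (Curtis--Fort, Eda), one may find a subspace $A \subseteq X$ homeomorphic to $\Er^1$ with wild point $y$ together with a retraction $r: X \to A$. By functoriality of $\pi_1^{wh}$ (Corollary~3.7), the induced map $r_{\#}: \pi_1^{wh}(X,x_0) \to \pi_1^{wh}(A, r(x_0))$ is a continuous surjection. If $r(x_0) = y$, then Theorem 5.9 gives non-separability of the codomain directly; otherwise $A$ is SLSC at $r(x_0)$, so the codomain is discrete by the proposition cited above, but the earring group $\pi_1(\Er^1)$ has cardinality $2^{\aleph_0}$, so the codomain is an uncountable discrete space and again non-separable. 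Since continuous images of separable spaces are separable, $\pi_1^{wh}(X,x_0)$ is non-separable. For $\widetilde{X}^{wh}$, note that $\pi_1^{wh}(X,x_0) = p^{-1}(x_0)$ sits inside as a subspace, and hereditary separability of metric spaces then forces $\widetilde{X}^{wh}$ non-separable as well.

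The main obstacle is securing the retraction $r: X \to A$ with the prescribed wild point $y$; this is the key structural input from one-dimensional Peano continuum theory and is where the argument genuinely uses the dimension hypothesis. Absent such a retraction, the alternative is to mimic the Cantor diagonalization of Theorem 5.9 directly inside $X$: fix a null sequence of essential reduced simple loops $\ell_n$ at $y$, a path $\alpha$ from $x_0$ to $y$, and consider classes $[\alpha \cdot (\prod_{j} \ell_{n_j}) \cdot \ell_m \cdot \alpha^{-}] \in \pi_1(X,x_0)$. The hard step in that alternative is controlling how whisker neighborhoods based at $x_0$ interact with the conjugating arc $\alpha$, since $\alpha$ can a priori carry loops small at $x_0$ into loops large at $y$, breaking the reduced-representative bookkeeping of Lemma 5.8. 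The retraction sidesteps this basepoint transfer entirely, which is why I would lean on it.
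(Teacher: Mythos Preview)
Your proof is correct but diverges from the paper's in both halves. For part~(1), the paper exhibits $\widetilde{X}^{wh}$ as a continuous image of the separable path space $P(X,x_0)$ with the compact-open topology, using that in the SLSC case the quotient map $\pi:P(X,x_0)\to\widetilde{X}^{wh}$ is continuous; you instead observe that $p$ is a genuine covering map with countable fibre over a second-countable base, which is more elementary and sidesteps the quotient-versus-whisker comparison. For part~(2), the paper runs the argument through the \emph{embedding} $s_\#:\pi_1^{wh}(\Er^1,b_0)\to\pi_1^{wh}(X,x_0)$ together with hereditary separability of metric spaces, but in doing so it silently fixes $x_0=s(b_0)$; your \emph{surjection} argument via $r_\#$ works at an arbitrary basepoint, and your case split on whether $r(x_0)$ is the wild point is exactly what the general statement needs. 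One simplification worth noting: you do not actually need a topological retraction onto a subspace $A\subseteq X$. The Cannon--Conner input the paper cites is only the homotopy retraction $r\circ s\simeq 1_{\Er^1}$, and that already forces $r_\#:\pi_1(X,x_0)\to\pi_1(\Er^1,r(x_0))$ to be surjective for every $x_0$ (it is surjective at $s(b_0)$ since $s_\#$ is a section up to a change-of-basepoint isomorphism, and surjectivity transports along such isomorphisms). So your ``main obstacle'' dissolves and the backup diagonalization is unnecessary.
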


\begin{proof}
  We assume that $X$ is metrizable and so $\widetilde{X}^{wh}$ and $\pi_1^{wh}(X,x_0)$ are pseudometrizable. Moreover, since all one-dimensional Hausdorff spaces are homotopically Hausdorff, $\widetilde{X}^{wh}$ and $\pi_1^{wh}(X,x_0)$ are both Hausdorff and thus both metrizable. If $X$ is semi-locally simply connected, then $\pi_1^{wh}(X,x_0)$ is finitely generated and discrete and $p:\widetilde{X}^{wh} \to X$ is a universal covering map. Let $P(X,x_0)$ be the space of paths $\zeta:I \to X$ with $\zeta(0) = x_0$ equipped with the compact-open topology. Since $X$ is a compact metric space, $P(X,x_0)$ is a separable metric space \cite{engl}. Now, consider the function $\pi: P(X,x_0) \to \widetilde{X}, \pi(\zeta) = [\zeta]$ that identifies path-homotopy classes. It's known that the quotient topology on $\widetilde{X}$ inherited from $\pi$ gives $\widetilde{X}$ the structure of a universal covering space of $X$, and is thus equivalent to the Whisker topology (by the usual classification of covering spaces) \cite{spanier}. Hence, $\pi:P(X,x_0) \to \widetilde{X}^{wh}$ is a continuous map of a separable space onto $\widetilde{X}^{wh}$. Then it follows that the metrizable space $\widetilde{X}$ and its subspace $\pi_1^{wh}(X,x_0)$ are separable.
  On the other hand, suppose that $X$ is not semi-locally simply connected. Then there exists a map $s: \Er^1 \to X$ and $r:X \to \Er^1$ with $r \circ s \simeq 1_{\Er^1}$ \cite{cannon}. Let $x_0 = s(b_0)$. Then functoriality of the Whisker topology on the fundamental group (Corollary 3.5) gives that the induced continuous homomorphisms $s_{\#}:\pi_1^{wh}(X,x_0) \to \pi_1^{wh}(X,x_0)$ and $r_{\#}:\pi_1^{wh}(X,x_0) \to \pi_1^{wh}(\Er^1,b_0)$ satisfy $r_{\#} \circ S_{\#} = 1_{\pi_1^{wh}(X,x_0)}$ From the topological perspective this implies that $s_{\#}$ is an embedding of spaces. Hence, $s_{\#}$ embeds the non-separable space $\pi_1^{wh}(\Er^1,b_0)$ by Theorem 5.7  into the metrizable space $\pi_1^{wh}(X,x_0)$. Now, since subspaces of separable metric spaces are separable, we conclude that $\pi_1^{wh}(X,x_0)$ is not separable. Moreover, $\pi_1^{wh}(X,x_0)$ is a subspace of the metrizable space $\widetilde{X}^{wh}$ so by the same reasoning, $\widetilde{X}^{wh}$ can't be separable.

\end{proof}

  \section*{\centering Acknowledgments}

The author would like to thank Jeremy Brazas for his valuable input and suggestions in writing this paper and for inspiration from his blog ``wild topology''.


\end{document}